\newcommand{\ignore}[1]{}
\title{A note on quadratic constraints with indicator variables: Convex hull description and perspective relaxation}
\author{Andr\'es G\'omez\thanks{Department of Industrial and System Engineering, University of Southern California, Los Angeles, CA 90089. Phone: 510-499-2418. Email: \texttt{gomezand@usc.edu}.}  \and Weijun Xie\thanks{H. Milton Stewart School of Industrial and Systems Engineering, Georgia Institute of Technology, Atlanta, GA 30332 Email: \texttt{wxie@gatech.edu}.} }
\begin{document}
	\maketitle
	
	\begin{abstract}
	    In this paper, we study the mixed-integer nonlinear set given by a separable quadratic constraint on continuous variables, where each continuous variable is controlled by an additional indicator. This set occurs pervasively in optimization problems with uncertainty and in machine learning. We show that optimization over this set is NP-hard. Despite this negative result, we characterize the structure of the convex hull, and show that it can be formally studied using polyhedral theory. Moreover, we show that although perspective relaxation in the literature for this set fails to match the structure of its convex hull, it is guaranteed to be a close approximation.
	\end{abstract}

\section{Introduction}
In this paper, given $Z\subseteq \{0,1\}^n$, we study set 
	$$X\defeq \{(\bm x,\bm z)\in \R^n\times Z: \|\bm x\|_2^2\leq 1,\;  \bm x\circ(\ones-\bm z)=0 \},$$
	where $\ones$ is a vector of $1$s and ``$\circ$" denotes the Hadamard (entry-wise) product of vectors. 
Set $X$ is non-convex due to the binary constraints encoded by $Z$, as well as the complementarity constraints $\bm x\circ(\ones-\bm z)=0$ linking the continuous and binary variables. Observe that arbitrary separable quadratic constraints of the form
$\sum_{i=1}^n (d_ix_i)^2\leq b$ can be modeled with $X$ as well through the change of variables $\bar x_i\defeq (d_i/\sqrt{b})x_i$. Note that since any $(\bm{x},\bm{z})\in X$ satisfies $|x_i|\leq 1$, the complementarity constraints can be linearized as the big-M constraints 
\begin{equation}\label{eq:bigM}
|x_i|\leq z_i,\quad i=1,\dots,n.
\end{equation}
Our overall goal is to understand and characterize the convex hull of $X$, denoted as $\conv(X)$. Throughout the paper, for simplicity, we use the following convention for division by $0$: $a/b=0$ if $a=b=0$, and $a/b=\infty$ ($-\infty$) if $b=0$ and $a>0$ (or $a<0$).

\subsection{Applications} \label{sec:apps}
Set $X$ arises pervasively in practice. We now discuss three settings where it plays a key role.

\paragraph{Sparse PCA} Set $X$ arises directly in sparse principal component analysis problems \cite{d2008optimal,zou2006sparse,dey2021using,li2020exact}, a fundamental problem in statistics which can be formulated as 
\begin{subequations}\label{eq:sparsePCA}
\begin{align}
\max\;& \bm{x'\Sigma x}\label{eq:sparsePCA_obj}\\
\text{s.t.}\;&\|\bm{x}\|_2^2\leq 1,\; \|\bm{z}\|_1\leq k,\; \bm x\circ(\ones-\bm z)=0,\label{eq:sparsePCA_constr}
\end{align}
\end{subequations}
where $\bm{\Sigma}\succeq 0$ and $k\in \Z_+$ is a parameter controlling the sparsity of the solution. Observe that the feasible region given by constraints \eqref{eq:sparsePCA_constr} corresponds exactly to $X$ with set $Z=\{\bm{z}\in \{0,1\}^n: \|\bm{z}\|_1\leq k\}$. Thus, understanding $\conv(X)$ is critical to designing better convex approximations of~\eqref{eq:sparsePCA}.

\paragraph{General convex quadratic constraints} Given $\bm{\Sigma}\succeq 0$, consider the system of inequalities
\begin{equation}\label{eq:quadConstr}\bm{y'\Sigma y}\leq b,\; \bm{y}\circ(\ones-\bm z)=0,\; \bm y\in \R^n,\;\bm{z}\in Z\subseteq \{0,1\}^n.\end{equation}
System \eqref{eq:quadConstr} arises for example in mean-variance optimization problems \cite{atamturk2008b}, where the quadratic constraint is used to impose an upper bound on the risk (variance) of the solution. While system \eqref{eq:quadConstr} involves a non-separable quadratic constraint, a study of set $\conv(X)$ can be still used to construct strong convex relaxations. Indeed, if $\bm{\Sigma}=\bm{D}+\bm{R}$ where $\bm{R}\succeq 0$, $\bm{D}\succ 0$ and diagonal, then we can reformulate system \eqref{eq:quadConstr} by introducing additional variables $(x_0,\bm{x})\in \R^{n+1}$ as
\begin{subequations}\label{eq:quadReformulation}
\begin{align}&\sum_{i=0}^n x_i^2\leq 1,\; \bm x\circ(\ones-\bm z)=0,\;x_0(1-z_0)=0,\; \bm{z}\in Z\label{eq:quadReformulation_X}\\
&z_0=1,\;\sqrt{(\bm{y}'(\bm{R}/b) \bm{y})}\leq x_0,\;\sqrt{(D_{ii}/b)}|y_i|\leq x_i \text{ for }i=1,\dots,n, \label{eq:quadReformulation_conic}
\end{align}
\end{subequations}
where constraints \eqref{eq:quadReformulation_X} correspond precisely to $X$ and constraints \eqref{eq:quadReformulation_conic} are convex and SOCP-representable. Therefore, convex relaxations for system \eqref{eq:quadConstr} can be obtained by strengthening constraints \eqref{eq:quadReformulation_X} using $\conv(X)$. 

\paragraph{Robust optimization} Consider a robust optimization problem of the form
\begin{equation}\label{eq:robust}
\min_{\bm{y}\in Y}\max_{\bm{a}\in \mathcal{U}} \bm{a'y},
\end{equation}
where vector $\bm{y}$ are the decision variables, set $Y\subseteq \R^n$ is the (possibly non-convex) feasible region and set $\mathcal{U}\subseteq \R^n$ is an uncertainty set corresponding to the objective coefficients. Robust optimization \eqref{eq:robust} is a fundamental tool to tackle decision-making under uncertainty problems. Two popular choices for the uncertainty set $\mathcal{U}$, each with its own merits and disadvantages, are: the approach of \citet{ben2000robust}, where $\mathcal{U}$ is an ellipsoid; and the approach of \citet{bertsimas2004price}, where only a small subset of the coefficients $\bm{a}$ are allowed to change while satisfying box constraints. 

Thus, a natural uncertainty set inspired by the aforementioned two approaches allows few coefficients to change and imposes ellipsoidal constraint on the changing coefficients, that is,
set \begin{equation}\label{eq:uncertaintySet}\mathcal{U}\defeq \left\{\bm{a}\in \R^n: \exists \bm{(x,z)}\in \R^n\times \{0,1\}^n \text{ s.t. }\bm{a}=\bm{\tilde a}+\bm{x},\; \sum_{i=1}^n(d_ix_i)^2\leq b,\; \|\bm{z}\|_1\leq k,\; \bm x\circ(\ones-\bm z)=0\right\},\end{equation}
where $\bm{\tilde a}$ are the nominal values for the coefficients. The uncertainty set $\mathcal{U}$ is appropriate for example when changes in coefficients $\bm{a}$ are caused by rare events, and the change in the coefficients (when such changes occur) can be accurately modeled with a Gaussian distribution. Constraint $\|\bm{z}\|_1\leq k$ could be replaced by other constraints to capture more sophisticated relationships on the support of the perturbed coefficients.

Since set $\mathcal{U}$ is non-convex, solving \eqref{eq:robust} can be difficult and require sophisticated approaches \cite{borrero2021modeling}. Nonetheless, understanding $\conv(X)$ may lead to the possibility of using standard duality approaches to obtain deterministic counterparts of \eqref{eq:robust}. We further discuss this problem in \S\ref{sec:counterpart}.

\subsection{Perspective relaxation and outline} 
A closely related set to $X$ that is well understood in the literature is the mixed-integer epigraph of a separable quadratic function with indicators, that is, 
$X_{\text{epi}}\defeq \{(\bm x,\bm z,t)\in \R^n\times Z\times \R: \|\bm x\|_2^2\leq t,\;  \bm x\circ(\ones-\bm z)=0 \}.$ Its convex hull can be described via the \emph{perspective relaxation} $\text{cl conv}(X_{\text{epi}})=\{(\bm x,\bm z,t)\in \R^n\times \conv(Z)\times \R: \sum_{i=1}^nx_i^2/z_i\leq t\}$, see \cite{Ceria1999,Frangioni2006,akturk2009strong,Gunluk2010} for the case $Z=\{0,1\}^n$ and \cite{bacci2019new,wei2020convexification,wei2021ideal,wei2022convex,xie2020scalable} for cases with more general constraints. Thus, a natural convex relaxation for set $X$ is also given by the perspective relaxation
\begin{equation}
R_{\text{persp}}\defeq\{(\bm x,\bm z)\in \R^n\times \conv(Z): \sum_{i=1}^nx_i^2/z_i\leq 1\}. 
\end{equation}
However, it is unclear to what extent relaxation $R_{\text{persp}}$ coincides with $\conv(X)$: Are they the same? Is $R_{\text{persp}}$ ``necessary" to describe $\conv(X)$? Does the structure of $R_{\text{persp}}$ even ``matches" $\conv(X)$? Is $R_{\text{persp}}$ a strong relaxation? How can it be improved?

All these questions can be precisely answered for polyhedral sets: an inequality is necessary for a polyhedron if it is facet-defining; a relaxation matches the structure of a polyhedron if it is defined by a finite number of linear inequalities. However,
since $\conv(X)$ is in general non-polyhedral, it is unclear (to date) how to formally answer the aforementioned questions. Ideally, one would like to explicitly compute $\conv(X)$ and ``see" how well the set $R_{\text{persp}}$ matches this structure. Unfortunately, as we show in \S\ref{sec:NP}, optimization over set $X$ is NP-hard even when $Z=\{0,1\}^n$. Thus, an explicit computation of $\conv(X)$ is unlikely. This result immediately implies that $R_{\text{persp}}\neq conv(X)$, but does not provide insights into answering the remaining questions.

In this paper, we close this gap in the literature. In \S\ref{sec:hull} we characterize the structure of $\conv(X)$, and in particular we show that convexification of $X$ reduces to the convexification of a family of polyhedral sets. Interestingly, this family of polyhedra is well-studied in the literature. In \S\ref{sec:relaxations} we review how to obtain facet-defining inequalities, and we also show that $R_{\text{persp}}$ corresponds to using a strong nonlinear relaxation of these polyhedral sets. In \S\ref{sec:counterpart} we propose an approximate deterministic counterpart of the robust optimization problem \eqref{eq:robust} with discrete uncertainty \eqref{eq:uncertaintySet}, and in \S\ref{sec:computations} we present computations with this proposed formulation.

\section{NP-hardness}\label{sec:NP}
In this section we show that optimization of a linear function over set $X$ is NP-hard. This result indicates that a compact explicit computation of $\conv(X)$ is unlikely to be possible. 

Consider the optimization problem
\begin{subequations}\label{eq:optDiag}
\begin{align}
    \min_{\bm x,\bm z}\;& \bm a'\bm x+\bm c'\bm z\\
    \text{s.t.}\;&\|\bm x\|_2^2\leq 1\\
    &\bm x\circ(\ones-\bm z)=0\\
    &\bm x\in \R^n,\; \bm z\in Z.
\end{align}
\end{subequations}
\begin{proposition}\label{prop:NP}
Problem~\eqref{eq:optDiag} is NP-hard even if $Z=\{0,1\}^n$.
\end{proposition}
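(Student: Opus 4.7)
The plan is to reduce the NP-complete \textsc{SubsetSum} problem to Problem~\eqref{eq:optDiag}. The starting point is that, for any fixed $\bm{z}\in\{0,1\}^n$ with support $S := \{i:z_i=1\}$, the complementarity constraint forces $x_i=0$ off $S$ and the remaining inner subproblem is $\min_{\bm{x}_S}\bm{a}_S'\bm{x}_S$ subject to $\|\bm{x}_S\|_2^2\leq 1$. By Cauchy--Schwarz, its optimal value equals $-\sqrt{\sum_{i\in S}a_i^2}$, achieved at $x_i=-a_i/\sqrt{\sum_{j\in S}a_j^2}$. Consequently, Problem~\eqref{eq:optDiag} reduces to the purely discrete problem
\[
\min_{\bm{z}\in\{0,1\}^n}\;h(\bm{z}), \qquad h(\bm{z}):=-\sqrt{\textstyle\sum_{i=1}^n a_i^2 z_i}+\sum_{i=1}^n c_i z_i.
\]
Thus it suffices to prove NP-hardness of minimizing $h$ over $\{0,1\}^n$.

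\textbf{Construction of the reduction.} Given a \textsc{SubsetSum} instance with positive integers $w_1,\dots,w_n$ and target $T$, I would set $a_i^2=w_i$ and $c_i=w_i/(2\sqrt{T})$. Writing $s(\bm{z})=\sum_{i=1}^n w_i z_i$, this yields $h(\bm{z})=g(s(\bm{z}))$ where
\[
g(s)=\frac{s}{2\sqrt{T}}-\sqrt{s}.
\]
A one-line calculation gives $g'(s)=\tfrac{1}{2\sqrt{T}}-\tfrac{1}{2\sqrt{s}}$ and $g''(s)=\tfrac{1}{4}s^{-3/2}>0$, so $g$ is strictly convex on $[0,\infty)$ with unique minimizer $s^{\ast}=T$ and minimum value $g(T)=-\sqrt{T}/2$.

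\textbf{Correctness and gap.} The YES/NO dichotomy follows because $\min_{\bm{z}} h(\bm{z})=-\sqrt{T}/2$ if and only if some $\bm{z}\in\{0,1\}^n$ achieves $s(\bm{z})=T$, i.e.\ the \textsc{SubsetSum} instance is feasible. Integrality of the $w_i$ ensures that any achievable $s\ne T$ satisfies $|s-T|\geq 1$, and strict convexity of $g$ then gives a separation $\min h \geq -\sqrt{T}/2 + \Omega(T^{-3/2})$ in the NO case, so the decision version of~\eqref{eq:optDiag} (with threshold $-\sqrt{T}/2$) decides \textsc{SubsetSum} via a polynomial-time transformation.

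\textbf{Main obstacle.} The only delicate point is the algebraic (rather than rational) nature of the coefficients $a_i=\sqrt{w_i}$ and $c_i=w_i/(2\sqrt{T})$; these have polynomial bit length as algebraic numbers, which is standard in NP-hardness arguments for continuous optimization. If a purely rational reduction is preferred, one can pad \textsc{SubsetSum} so that the target is a perfect square (by replacing $w_i\mapsto T w_i$ and $T\mapsto T^2$), after which $\sqrt{T'}=T$ is rational and the construction above produces rational data. Once this technicality is addressed, every remaining step is routine.
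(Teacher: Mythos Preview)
Your argument follows the same route as the paper's: fix $\bm z$, solve the inner ball-constrained linear problem in closed form (you invoke Cauchy--Schwarz where the paper uses Lagrangian duality, with identical outcome), reduce to the purely discrete problem $\min_{\bm z\in\{0,1\}^n}\bm c'\bm z-\sqrt{\sum_i a_i^2 z_i}$, and then argue NP-hardness of the latter via a partition/subset-sum reduction --- the paper simply cites a prior result for this last step, whereas you write out an explicit \textsc{SubsetSum} reduction using the strictly convex function $g(s)=s/(2\sqrt{T})-\sqrt{s}$.

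One small caveat on your rationality fix: the padding $w_i\mapsto Tw_i$, $T\mapsto T^2$ indeed makes $\sqrt{T'}=T$ and hence every $c_i=w_i'/(2\sqrt{T'})=w_i/2$ rational, but $a_i=\sqrt{w_i'}=\sqrt{Tw_i}$ remains irrational in general, so this step alone does not yield fully rational data for~\eqref{eq:optDiag}. The reduction the paper invokes has the same feature (the natural parameters of the discrete problem are $a_i^2$ and $c_i$, not $a_i$), and the standard remedy is the one you already mention --- work in a model admitting algebraic inputs --- or else round and control the error.
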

\begin{proof}
Consider problem \eqref{eq:optDiag} where vector $\bm z$ is fixed, and let $S=\{i\in [n]: z_i=1\}$ (assume $S\neq \emptyset$). Then, for this choice of $\bm z$, problem \eqref{eq:optDiag} reduces to 
\begin{subequations}\label{eq:optDiagFixed}
\begin{align}
    \epsilon_S=\min_{\bm x}\;&\sum_{i\in S}c_i+\sum_{i\in S}a_ix_i\\
    \text{s.t.}\;&\sum_{i\in S} x_i^2\leq 1\\
    &\bm x\in \R^S.
\end{align}
\end{subequations}

Since the Lagrangian dual of problem \eqref{eq:optDiagFixed} has no duality gap (as Slater condition holds), an optimal objective value $\epsilon_S$ can be computed as
\begin{align*}
    \epsilon_S&=\sum_{i\in S}c_i+\max_{\lambda\geq 0}\min_{\bm x\in \R^S}\sum_{i\in S}a_ix_i+\lambda\sum_{i\in S} x_i^2-\lambda\\
    &=\sum_{i\in S}c_i+\max_{\lambda\geq 0}-\frac{1}{4\lambda}\sum_{i\in S}a_i^2-\lambda\tag{$\because 2x_i^*=-a_i/\lambda$}\\
    &=\sum_{i\in S}c_i-\sqrt{\sum_{i\in S}a_i^2}\tag{$\because \lambda^*=\frac{1}{2}\sqrt{\sum_{i\in S}a_i^2}$}.
\end{align*}

In other words, the optimal vector $\bm z$ of \eqref{eq:optDiag} can be found by either setting $\bm z=\bm 0$ (with objective value $\epsilon_\emptyset=0$), or by solving the optimization problem
\begin{equation}\label{eq:discrete}
    \min_{\bm z\in Z}\sum_{i=1}^n c_iz_i-\sqrt{\sum_{i=1}^n a_i^2z_i}.
\end{equation}
Finally, as the partition problem can be reduced to problem \eqref{eq:discrete} with $Z=\{0,1\}^n$ (see \cite{ahmed2011maximizing}), problem \eqref{eq:optDiag} is NP-hard even in this case.
\end{proof}

\begin{remark}\label{rem:polySolvable}
If $\bm{c}=\bm{0}$ but there is a constraint of the form $\|\bm{z}\|_1=k$, then \eqref{eq:discrete} can be solved by sorting.  
Polynomial-time solvability of this case suggests that it may be possible to construct a convex relaxation that guarantees integrality of the solutions under these conditions. In other words, it may be possible to characterize the convex hull of the set
\begin{align*}
Y&=\left\{\bm{x}\in \R^n:\|\bm{x}\|_0\leq k,\; \|\bm{x}\|_2^2\leq 1\right\},
\end{align*}
where $\|\bm{x}\|_0=\sum_{i=1}^n \mathbbm{1}_{\{x_i\neq 0\}}$ is the cardinality of the support of $\bm{x}$. Indeed, set $Y$ is permutation-invariant, and its convex hull $\conv(Y)$ is described in \cite{kim2021convexification}, or projection of the perspective relaxation (i.e., $\conv(Y)=\textrm{proj}_{\bm x}(R_{persp})$). Note however that these relaxations are not ideal for $X$, i.e., solutions of linear optimization problems over $\conv(Y)$ do not coincide with the solutions of optimization problems over $X$ if $\bm{c}\neq \bm{0}$.
\qed
\end{remark}

\section{Structure of the convex hull}\label{sec:hull}
From Proposition~\ref{prop:NP}, we know that an explicit characterization of $\conv(X)$ is unlikely to be possible. In this section, we settle for a weaker structural result: in Theorem~\ref{theo:hull}, we state an explicit description of $\conv(X)$ that relies on the convex hulls of polyhedral sets. Naturally, describing these polyhedral sets is NP-hard as well; nonetheless, they are substantially easier to handle, thanks to the maturity of polyhedral theory. 

We first define the polyhedral sets that are key to characterizing $\conv(X)$. 
\begin{definition}
Given $\bm \alpha\in \R^n$, define sets $P_0(\bm \alpha),P(\bm \alpha)\subseteq \R^{2n}$ as 
\begin{align*}
	P_0(\bm{\alpha})\defeq&\left\{(\bm x,\bm z)\in \R^n\times Z:\sum_{i=1}^n | \alpha_i x_i|\leq  \sqrt{\sum_{i=1}^n  \alpha_i^2z_i}\right\}, \text{ and}\\
P(\bm{\alpha})\defeq&\conv\Big(P_0(\bm{\alpha})\Big).
\end{align*}
\end{definition}
Note that set $P(\bm{\alpha})$ is the convex hull of a union of a finite number of polytopes, one for each $\bm{z}\in Z$. Thus, $P(\bm{\alpha})$ is a polytope itself. We defer to \S\ref{sec:relaxations} the discussion on constructing relaxations of set $P(\bm{\alpha})$. As Proposition~\ref{prop:valid} below states, set $P(\bm{\alpha})$ is a relaxation of set $X$. 


\begin{proposition}[Validity]\label{prop:valid}
Set $\conv(X)\subseteq P(\bm{\alpha})$ for all $\bm{\alpha}\in \R^n$. 
\end{proposition}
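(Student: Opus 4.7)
The plan is to observe that $P(\bm{\alpha})$ is convex by construction, so it suffices to establish the pointwise inclusion $X \subseteq P_0(\bm{\alpha})$; taking convex hulls then yields $\conv(X) \subseteq \conv(P_0(\bm{\alpha})) = P(\bm{\alpha})$.

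Given an arbitrary point $(\bm{x}, \bm{z}) \in X$, I would verify directly that the inequality $\sum_{i=1}^n |\alpha_i x_i| \leq \sqrt{\sum_{i=1}^n \alpha_i^2 z_i}$ holds. The key idea is to insert factors of $\sqrt{z_i}$ using two properties of points in $X$: first, since $\bm{z} \in Z \subseteq \{0,1\}^n$, we have $z_i = z_i^2 = \sqrt{z_i}\cdot \sqrt{z_i}$; second, the complementarity constraint $\bm{x} \circ (\ones - \bm{z}) = 0$ forces $x_i = 0$ whenever $z_i = 0$, so $x_i^2 = x_i^2 z_i$ for every $i$.

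With these in hand, I would rewrite
\begin{equation*}
\sum_{i=1}^n |\alpha_i x_i| \;=\; \sum_{i=1}^n \bigl(|\alpha_i|\sqrt{z_i}\bigr)\bigl(|x_i|\sqrt{z_i}\bigr)
\end{equation*}
(the equality holding because multiplying $x_i$ by $\sqrt{z_i}$ leaves it unchanged under complementarity) and apply the Cauchy--Schwarz inequality to obtain
\begin{equation*}
\sum_{i=1}^n |\alpha_i x_i| \;\leq\; \sqrt{\sum_{i=1}^n \alpha_i^2 z_i}\cdot \sqrt{\sum_{i=1}^n x_i^2 z_i} \;=\; \sqrt{\sum_{i=1}^n \alpha_i^2 z_i}\cdot \|\bm{x}\|_2 \;\leq\; \sqrt{\sum_{i=1}^n \alpha_i^2 z_i},
\end{equation*}
where the last step uses $\|\bm{x}\|_2^2 \leq 1$. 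This shows $(\bm{x},\bm{z}) \in P_0(\bm{\alpha})$ and completes the argument.

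There is no real obstacle here; the proof is essentially a one-line application of Cauchy--Schwarz once the complementarity structure is used to legitimately introduce the $\sqrt{z_i}$ factors. The only subtlety worth noting is that the absolute values on the left and the binary nature of $z_i$ make the inequality tight precisely at integer feasible points with all mass on a single coordinate, which is consistent with $P_0(\bm{\alpha})$ being a valid (and generally non-trivial) relaxation.
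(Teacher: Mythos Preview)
Your proof is correct and follows essentially the same route as the paper: reduce to $X\subseteq P_0(\bm{\alpha})$, use complementarity and $z_i\in\{0,1\}$ to insert the $\sqrt{z_i}$ factors, then apply Cauchy--Schwarz (the paper calls it H\"older) together with $\|\bm{x}\|_2\leq 1$. Your closing remark about tightness is not quite right---equality in Cauchy--Schwarz occurs when $(|x_i|\sqrt{z_i})$ is proportional to $(|\alpha_i|\sqrt{z_i})$, not only when mass sits on a single coordinate---but this does not affect the proof.
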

\begin{proof}
	It suffices to show that $X\subseteq P_0(\bm{\alpha})$. 
Since $\bm x\circ(\ones-\bm z)=0$ and $\bm {z}\in\{0,1\}^n$, we must have $x_i=x_iz_i=x_i\sqrt{z}_i$ for all $i=1,\ldots,n$. 
Hence, we find that $$\sum_{i=1}^n |\alpha_ix_i|=\sum_{i=1}^k |(\alpha_i\sqrt{z}_i)x_i|\leq 
\sqrt{\sum_{i=1}^n\alpha_i^2z_i}\sqrt{\sum_{i=1}^nx_i^2}\leq \sqrt{\sum_{i=1}^n\alpha_i^2z_i},
$$
where the first inequality is due to H\"older's inequality, and the second one is because of $\sum_{i=1}^nx_i^2\leq 1$.
Hence, $(\bm{x},\bm{z})\in P_0(\bm{\alpha})\subseteq P(\bm{\alpha})$, concluding the proof. 
\end{proof}

Moreover, we now show how to use $P(\bm{\alpha})$ to construct an equivalent convex formulation of the NP-hard problem \eqref{eq:optDiag}. Note that in Proposition~\ref{prop:optD} below, we set $\bm{\alpha}=\bm{a}$.
\begin{proposition}[Optimality]\label{prop:optD}
	Problem \eqref{eq:optDiag} is equivalent to
	\begin{subequations}\label{eq:hull_diag}
	\begin{align}
	\min_{\bm x,\bm z}\;&\bm a'\bm x+\bm c'\bm z\\
	\text{s.t.}\;&(\bm x,\bm z)\in P(\bm a),\label{eq:hull_diag_constr}
	\end{align} 
	\end{subequations}
	that is, they both have the same optimal objective value and there exists an optimal solution of \eqref{eq:hull_diag} that is also optimal for \eqref{eq:optDiag}.
\end{proposition}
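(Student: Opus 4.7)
The plan is to show that \eqref{eq:optDiag} and \eqref{eq:hull_diag} have the same optimal value, after which the ``optimal solution'' statement follows by observing that any $(\bm x^*,\bm z^*)$ optimal for \eqref{eq:optDiag} is automatically feasible for \eqref{eq:hull_diag} (since $X\subseteq P(\bm a)$ by Proposition~\ref{prop:valid}) and attains the common optimal value. Thus the whole argument reduces to proving that the optimal value of \eqref{eq:hull_diag} is at least that of \eqref{eq:optDiag}: the reverse inequality is already given by Proposition~\ref{prop:valid}.

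To bound the optimal value of \eqref{eq:hull_diag} from below, I would first use the fact that the objective is linear and $P(\bm a)=\conv(P_0(\bm a))$, so the infimum of the objective over $P(\bm a)$ equals the infimum over $P_0(\bm a)$. Then, fixing $\bm z\in Z$ and letting $S=\{i\in [n]:z_i=1\}$, I would minimize $\bm a'\bm x+\bm c'\bm z$ over the $\bm z$-slice of $P_0(\bm a)$, namely the set defined by $\sum_{i=1}^n|a_i x_i|\leq \sqrt{\sum_{i=1}^n a_i^2 z_i}$. The triangle inequality gives
\begin{equation*}
\bm a'\bm x \;\geq\; -\sum_{i=1}^n |a_i x_i| \;\geq\; -\sqrt{\sum_{i=1}^n a_i^2 z_i},
\end{equation*}
and this lower bound is attained (for instance by choosing $x_i=-\operatorname{sign}(a_i)|a_i|/\sqrt{\sum_j a_j^2 z_j}$ whenever $a_i\neq 0$ and $x_i=0$ otherwise). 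Hence the slice minimum is $\bm c'\bm z-\sqrt{\sum_{i=1}^n a_i^2 z_i}$, which is exactly the value $\epsilon_S$ computed in the proof of Proposition~\ref{prop:NP}.

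Taking the infimum over $\bm z\in Z$ then yields
\begin{equation*}
\inf_{(\bm x,\bm z)\in P_0(\bm a)}\;\bm a'\bm x+\bm c'\bm z \;=\; \min_{\bm z\in Z}\Bigl(\sum_{i=1}^n c_i z_i - \sqrt{\sum_{i=1}^n a_i^2 z_i}\Bigr),
\end{equation*}
which is precisely the objective value of the discrete reformulation \eqref{eq:discrete} that was shown equivalent to \eqref{eq:optDiag} in the proof of Proposition~\ref{prop:NP}. Combining this with $\inf_{P(\bm a)}=\inf_{P_0(\bm a)}$ gives the desired lower bound, completing the equivalence.

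I do not anticipate any serious obstacle; the main subtlety is simply reusing the Lagrangian calculation from Proposition~\ref{prop:NP} in a purely convex setting—noticing that the $\ell_1$-constraint defining $P_0(\bm a)$ implies the same lower bound $-\sqrt{\sum_i a_i^2 z_i}$ that the $\ell_2$-constraint defining $X$ does. One minor point to address is that $P_0(\bm a)$ may be unbounded in coordinates where $a_i=0$, but these coordinates drop out of the objective, so the infimum is attained and the optimal $\bm x$ can be taken to satisfy the $X$-constraints in those coordinates as well.
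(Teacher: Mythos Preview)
Your approach is essentially the same as the paper's: reduce from $P(\bm a)$ to $P_0(\bm a)$ via linearity of the objective, use the chain $\bm a'\bm x\geq -\sum_i|a_ix_i|\geq -\sqrt{\sum_i a_i^2z_i}$, and project out $\bm x$ to recover \eqref{eq:discrete}. One small slip to fix: your explicit attainer $x_i=-\operatorname{sign}(a_i)\,|a_i|/\sqrt{\sum_j a_j^2 z_j}$ is generally \emph{infeasible} for the $\bm z$-slice of $P_0(\bm a)$, since then $\sum_i|a_ix_i|=\sum_i a_i^2/\sqrt{\sum_j a_j^2z_j}$, which exceeds $\sqrt{\sum_j a_j^2z_j}$ whenever some index has $a_i\neq 0$ but $z_i=0$; insert the missing factor $z_i$, i.e., take $x_i=-\operatorname{sign}(a_i)\,|a_i|z_i/\sqrt{\sum_j a_j^2 z_j}$, and the bound is attained as you claim.
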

\begin{proof}
It suffices to show that problem \eqref{eq:optDiag} is equivalent to 
\begin{subequations}\label{eq:hull_diag_int}
	\begin{align}
		\min_{\bm x,\bm z}\;&\bm a'\bm x+\bm c'\bm z\\
		\text{s.t.}\;&(\bm x,\bm z)\in P_0(\bm a).
	\end{align} 
\end{subequations}

In any feasible solution of \eqref{eq:hull_diag_int}, we find that 
\begin{equation}\label{eq:optIneq}
\bm{a'x}\geq -\sum_{i=1}^n|a_ix_i|\geq -\sqrt{\sum_{i=1}^n a_i^2z_i},
\end{equation}
where the first inequality follows directly
from the definition of the absolute value and the second inequality follows from constraints $(\bm x,\bm z)\in P_0(\bm a)$. Moreover, both inequalities \eqref{eq:optIneq} hold at equality in an optimal solution, since otherwise, it is always possible to increase/decrease $x_i$ for some index $i$ without violating feasibility while improving the objective value. Thus, projecting out variables $\bm{x}$, problem \eqref{eq:hull_diag_int} reduces to \eqref{eq:discrete}, which, as shown in the proof of Proposition~\ref{prop:NP}, is equivalent to \eqref{eq:optDiag}.
\end{proof}

Propositions~\ref{prop:valid} and \ref{prop:optD} together come with an alternative representation of $\conv(X)$ which is expressed as intersections of sets $P(\bm{\alpha})$ for all $\bm{\alpha}\in \R^n$. 
\begin{theorem}\label{theo:hull}
	The convex hull of $X$
	can be described (with an infinite number of constraints, one for each $\bm{\alpha}\in \R^n$) as
	\begin{align}\label{eq:hullD}
	\conv(X)=\bigcap_{\bm{\alpha}\in \R^n}P(\bm \alpha)
	\end{align}
\end{theorem}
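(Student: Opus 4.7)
The plan is to prove the two inclusions separately. The inclusion $\conv(X) \subseteq \bigcap_{\bm{\alpha}\in\R^n} P(\bm{\alpha})$ is immediate from Proposition~\ref{prop:valid}, since each $P(\bm{\alpha})$ is a convex superset of $X$. The interesting direction is the reverse inclusion, which I would establish via a separating-hyperplane argument in combination with Proposition~\ref{prop:optD}.

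For the reverse inclusion, I would argue by contradiction. Suppose there exists $(\bar{\bm{x}},\bar{\bm{z}})\in\bigcap_{\bm{\alpha}}P(\bm{\alpha})$ with $(\bar{\bm{x}},\bar{\bm{z}})\notin\conv(X)$. First I would observe that $X$ is compact (it is closed, bounded by $\|\bm{x}\|_2\le 1$ and $\bm{z}\in\{0,1\}^n$, and $Z$ is finite), so $\conv(X)$ is a closed convex set in finite dimension. By the separating hyperplane theorem, there exist $\bm{a}\in\R^n$, $\bm{c}\in\R^n$, and $\beta\in\R$ with
\[
\bm{a}'\bm{x}+\bm{c}'\bm{z}\ge \beta \quad\text{for all } (\bm{x},\bm{z})\in\conv(X),\qquad \bm{a}'\bar{\bm{x}}+\bm{c}'\bar{\bm{z}}<\beta.
\]

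The next step is to apply Proposition~\ref{prop:optD} with the particular choice $\bm{\alpha}=\bm{a}$. That proposition gives
\[
\min_{(\bm{x},\bm{z})\in\conv(X)}\bm{a}'\bm{x}+\bm{c}'\bm{z}=\min_{(\bm{x},\bm{z})\in X}\bm{a}'\bm{x}+\bm{c}'\bm{z}=\min_{(\bm{x},\bm{z})\in P(\bm{a})}\bm{a}'\bm{x}+\bm{c}'\bm{z}\ge \beta,
\]
where the first equality uses that minimizing a linear function over a set equals minimizing it over its convex hull. But since $(\bar{\bm{x}},\bar{\bm{z}})\in P(\bm{a})$ by assumption, the minimum on the right-hand side is at most $\bm{a}'\bar{\bm{x}}+\bm{c}'\bar{\bm{z}}<\beta$, yielding the desired contradiction.

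The main technical point is simply recognizing that Proposition~\ref{prop:optD} is precisely what converts a separating linear functional in the $\bm{x}$-direction into a constraint from the family $\{P(\bm{\alpha})\}$: the parameter $\bm{\alpha}$ that must be selected to match the separator is exactly the $\bm{x}$-coefficient $\bm{a}$ of the hyperplane. No difficulty is expected beyond standard care with the separation theorem (ensuring closedness of $\conv(X)$, which follows from compactness of $X$) and the identification $\bm{\alpha}=\bm{a}$.
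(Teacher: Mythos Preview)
Your proof is correct and takes essentially the same approach as the paper: both directions rest on Propositions~\ref{prop:valid} and~\ref{prop:optD}, with the key identification $\bm{\alpha}=\bm{a}$. The paper phrases the reverse inclusion as ``the two linear optimization problems have the same optimal value for every $(\bm{a},\bm{c})$,'' which is exactly your separating-hyperplane argument stated in support-function form; your version has the minor virtue of making the closedness of $\conv(X)$ explicit.
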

\begin{proof}
It is sufficient to show that the following two optimization problems 
\begin{align}
    &\min_{\bm{x},\bm{z}}\left\{\;\bm{a}'\bm{x}+\bm{c}'\bm{z} :(\bm x,\bm z)\in X\right\}\label{eq:optDiscrete}\\
    &\min_{\bm{x},\bm{z}}\left\{\;\bm{a}'\bm{x}+\bm{c}'\bm{z} :(\bm x,\bm z)\in \bigcap_{\bm{\alpha}\in \R^n}P(\bm \alpha)\right\}\label{eq:optContinuous}
\end{align}
are in fact equivalent. First, due to Proposition~\ref{prop:valid}, we find that \eqref{eq:optContinuous} is a relaxation of \eqref{eq:optDiscrete}. Second, the problem $\min_{\bm{x},\bm{z}}\left\{\;\bm{a}'\bm{x}+\bm{c}'\bm{z} :(\bm x,\bm z)\in P(\bm{a})\right\}$ is a further relaxation of \eqref{eq:optContinuous}, as it is obtained by dropping all the constraints but one. Third, due to Proposition~\ref{prop:optD}, this further relaxation is exact, and thus \eqref{eq:optContinuous} is exact as well. This concludes the proof.
\end{proof}

The description~\eqref{eq:hullD} of $\conv(X)$ can be highly nonlinear, since it involves an infinite number of constraints. However, the significance of Theorem~\ref{theo:hull} is that to understand $\conv(X)$ it suffices to study the polyhedral set $P(\bm{\alpha})$, which is arguably a simpler task due to advances in polyhedral theory, and since this set does not involve complementarity or other constraints linking the discrete and continuous variables. In \S\ref{sec:relaxations} we discuss how to obtain strong relaxation of $P(\bm{\alpha})$ in general. However, an alternative approach to obtain valid inequalities is to restrict the values of $\bm{\alpha}$, as the examples below show. 

\begin{example}\label{ex:bigM}
	Let $\bm{\alpha}=\bm{e}_i$ for some $i\in \{1,\ldots,n\}$, where $\bm{e}_i$ is the standard $i$-th basis vector of $\R^n$. In this case, \begin{align*}P(\bm{e}_i)&=\conv\left(\left\{(\bm x,\bm z)\in \R^n\times Z:|x_i|\leq \sqrt{z_i}\right\}\right)\\
	&=\left\{(\bm x,\bm z)\in \R^n\times \conv(Z):|x_i|\leq z_i\right\}.
	\end{align*} 
	 Thus, we find that big-M constraints \eqref{eq:bigM} are`` necessary" to describe $\conv(X)$. \qed
\end{example}

\begin{example}\label{ex:card}
	Suppose that $Z=\{\bm{z}\in \{0,1\}^n: \|\bm{z}\|_1= k\}$, and let $\bm{\alpha}=\ones$. In this case  \begin{align*}P(\ones)&=\conv\left(\left\{(\bm x,\bm z)\in \R^n\times \{0,1\}^n:\|\bm{x}\|_1\leq \sqrt{\|\bm{z}\|_1},\; \|\bm{z}\|_1= k\right\}\right)\\
		&=\left\{(\bm x,\bm z)\in \R^n\times [0,1]^n:\|\bm{x}\|_1\leq \sqrt{k},\;  \|\bm{z}\|_1= k\right\}.
	\end{align*} 
In particular we find that the inequality $\|\bm{x}\|_1\leq \sqrt{k}$, which was studied in \cite{dey2021using} in the context of sparse PCA, is ``necessary" to describe $\conv(X)$ in this case. 
	\qed
\end{example}

\ignore{
\begin{remark}
In a recent result, \citet{wei2022convex} showed that the convexification of the epigraph of an arbitrary convex quadratic function with indicator variables is given (in an extended formulation) by a single conic constraint (explicitly given) and polyhedral constraints. The results presented here are similar in the sense that the convexification of $X$ reduces to the description of the polyhedral sets $P(\bm{\alpha})$. However, the results are dissimilar in the sense that the description \eqref{eq:hullD} involves an \emph{infinite} number of polyhedral sets (albeit with similar structures), and thus is highly nonlinear even in an extended formulation.
\qed
\end{remark}
}

\section{Convex relaxations}\label{sec:relaxations}

This section discusses how to describe or approximate $P(\bm{\alpha})$. Interestingly, this family of polyhedra has already been studied in the literature. In \S\ref{sec:facets} we review existing results on the facial structure of $P(\bm{\alpha})$. In \S\ref{sec:natural} we study the natural \emph{nonlinear} relaxation of $P(\bm{\alpha})$, show that this relaxation is guaranteed to be strong, and establish links between this relaxation and the perspective relaxation $R_\text{persp}$. 

\subsection{Short review of relaxations via linear inequalities}\label{sec:facets}
We assume in this section that $Z=\{0,1\}^n$.
Given $\bm{\alpha}\in \R^n$, the facial structure of polyhedron $P(\bm{\alpha})$ was first studied in \cite{ahmed2011maximizing}, and the results were later refined in \cite{shi2022sequence}. We now review these results.
   
Define $N\defeq \{1,\dots,n\}$, and define the set function $g:2^N\to \R$ as $g(S)=\sqrt{\sum_{i\in S} \alpha_i^2}$. Since function $g$ is submodular, the submodular inequalities of \citet{nemhauser1978analysis} are valid for its hypograph. In particular, letting  $\rho_i(S)=g(S\cup\{i\})-g(S)$, the inequalities
\begin{subequations}\label{eq:submodular}
\begin{align}
\sum_{i=1}^n |\alpha_ix_{i}|&\leq g(S)-\sum_{i\in S} \rho_{i}(S\setminus\{i\})(1-z_i)+\sum_{i\in N\setminus S}\rho_i(\emptyset)z_i\quad&\forall S\subseteq N \label{eq:submodular1}\\
\sum_{i=1}^n |\alpha_ix_{i}|&\leq g(S)-\sum_{i\in S} \rho_{i}(N\setminus\{i\})(1-z_i)+\sum_{i\in N\setminus S}\rho_i(S)z_i\quad&\forall S\subseteq N \label{eq:submodular2}
\end{align}
\end{subequations}
are valid for $P(\bm{\alpha})$. However, coefficients $\rho_i(\emptyset)$ in \eqref{eq:submodular1} and $\rho_{i}(N\setminus\{i\})$ in \eqref{eq:submodular2} are not tight. Thus, inequalities \eqref{eq:submodular} are, in general, weak, and better inequalities can be obtained via lifting. Specifically, given $S\subseteq N$, the base inequality 
\begin{equation}\label{eq:base}\sum_{i=1}^n |\alpha_ix_{i}|\leq g(S)-\sum_{i\in S} \rho_{i}(S\setminus\{i\})(1-z_i)\end{equation}
is facet-defining for $\conv\left(\left\{(\bm x,\bm z)\in \R^n\times \{0,1\}^n:\sum_{i=1}^n |\alpha_ix_{i}|\leq \sqrt{\sum_{i=1}^n\alpha_i^2z_i},\; z_i=0 \;\forall i\in N\setminus S\right\}\right)$. Inequality~\eqref{eq:base} can then be lifted into a facet-defining inequality for $P(\bm{\alpha})$ through maximal lifting. In this case, lifting is sequence independent and the resulting inequality can be obtained in closed form, see \cite[Theorem 4]{shi2022sequence}. Similarly, inequality \eqref{eq:submodular2} can be improved through lifting, see \cite[Theorem 5]{shi2022sequence}. While the inequalities discussed here are facet-defining for the case $Z=\{0,1\}^n$, they may be weaker for the case with more general constraints. Nonetheless, we point out that strong valid inequalities have also been proposed for the case where $Z$ is defined by a knapsack constraint, see~\cite{yu2017maximizing}.

\subsection{Natural convex relaxation}\label{sec:natural}

Consider the natural nonlinear relaxation of $P(\bm{\alpha})$, obtained by simply dropping the integrality constraints on variables $\bm{z}$:
\begin{align*}
C(\bm{\alpha})\defeq&\left\{(\bm{x},\bm{z})\in \R^n\times \conv(Z):\sum_{i=1}^n |\alpha_ix_{i}|\leq \sqrt{\sum_{i=1}^n\alpha_i^2z_i}\right\}.
\end{align*}
While $C(\bm{\alpha})$ is hard to compute in general as it involves computing the convex hull of the feasible region $Z$, it can be obtained easily for example if $Z=\{0,1\}^n$, $Z=\{z\in \{0,1\}^n: \sum_{i=1}^n z_i\leq k\}$ for some $k\in \Z_+$, or more generally if the constraints defining $Z$ are totally unimodular. Moreover, the nonlinear constraint defining $C(\bm{\alpha})$ is SOCP-representable. Thus, this continuous relaxation can be used with many off-the-shelf solvers. 

Optimization over relaxation $C(\bm{\alpha})$ has also been studied in the literature \cite{atamturk2017maximizing}. Specifically, consider the convex relaxation of the problem \eqref{eq:hull_diag} given by 
\begin{subequations}\label{eq:opt_relax}
	\begin{align}
	\bar\zeta=\min_{\bm x,\bm z}\;&\bm a'\bm x+\bm c'\bm z\\
	\text{s.t.}\;&(\bm x,\bm z)\in C(\bm a),
	\end{align} 
\end{subequations}

\begin{proposition}\label{prop:relax} There exists an optimal solution $(\bm{\bar x},\bm{\bar z})$ of \eqref{eq:opt_relax} where $\bm{\bar z}$ lies on an edge of $\conv(Z)$. Moreover, if $\bm{c}'\bm{z}\leq 0$ for all $\bm{z}\in Z$, then $(4/5)\bar \zeta\geq \zeta^*\geq (5/4)\bar \zeta_r$, where $\zeta^*$ is the optimal objective value of problem \eqref{eq:hull_diag} --equivalently, problem \eqref{eq:optDiag}--, and $\zeta_r$ is the objective value of the feasible solution obtained by rounding $\bm{\bar z}$ to the best of the two extreme points of $\conv(Z)$ defining the edge where it lies. 
\end{proposition}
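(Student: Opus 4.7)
The plan is to first reduce \eqref{eq:opt_relax} to a minimization over $\bm{z}$ alone, mimicking the proof of Proposition~\ref{prop:optD}: in any optimum of \eqref{eq:opt_relax} the inequality defining $C(\bm{a})$ is tight and the signs of the $x_i$ are opposite to those of the $a_i$, so after eliminating $\bm{x}$ the problem becomes
\[\bar\zeta=\min_{\bm{z}\in\conv(Z)}\Big(\bm{c}'\bm{z}-\sqrt{\textstyle\sum_{i=1}^n a_i^2 z_i}\Big).\]
For the edge claim, fix any minimizer $\bar{\bm{z}}$ and set $u^{\star}=\sum_i a_i^2\bar z_i$. Then $\bar{\bm{z}}$ is also optimal for the linear program $\min\{\bm{c}'\bm{z}:\bm{z}\in\conv(Z),\;\sum_i a_i^2 z_i=u^{\star}\}$, so we may replace $\bar{\bm{z}}$ by an extreme point of the sliced polytope $\conv(Z)\cap\{\bm{z}:\sum_i a_i^2 z_i=u^{\star}\}$. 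Every such extreme point either is a vertex of $\conv(Z)$ or lies in the relative interior of an edge of $\conv(Z)$ cut transversally by the defining hyperplane, so in particular $\bar{\bm{z}}$ lies on an edge of $\conv(Z)$.

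For the approximation claim, write $\bar{\bm{z}}=\lambda\bm{z}^1+(1-\lambda)\bm{z}^2$ with $\bm{z}^1,\bm{z}^2\in Z$ the endpoints of the edge and $\lambda\in[0,1]$, and abbreviate $c_j=\bm{c}'\bm{z}^j\leq 0$ and $u_j=\sum_i a_i^2 z_i^j\geq 0$. Then $\bar\zeta=\lambda c_1+(1-\lambda)c_2-\sqrt{\lambda u_1+(1-\lambda)u_2}$ and $\zeta_r=\min\{c_1-\sqrt{u_1},c_2-\sqrt{u_2}\}$, so it suffices to prove the key two-point inequality
\[\zeta_r\leq \tfrac{4}{5}\,\bar\zeta.\]
Granted this, combining with $\zeta^{*}\leq \zeta_r$ (feasibility of the rounded solution) yields $\zeta^{*}\leq \tfrac{4}{5}\bar\zeta$, while combining with $\bar\zeta\leq \zeta^{*}$ (validity of the relaxation) yields $\zeta_r\leq \tfrac{4}{5}\bar\zeta\leq \tfrac{4}{5}\zeta^{*}$, i.e., $\zeta^{*}\geq \tfrac{5}{4}\zeta_r$.

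The main obstacle is establishing the two-point inequality. Setting $r_j=\sqrt{u_j}-c_j\geq 0$ and $\bar r=\sqrt{\lambda u_1+(1-\lambda)u_2}-\lambda c_1-(1-\lambda)c_2$, it is equivalent to $\bar r\leq \tfrac{5}{4}\max(r_1,r_2)$. Since both $\bar r$ and each $r_j$ are coordinatewise nondecreasing in $u_j$ and in $-c_j$, the worst case has $r_1=r_2$, and by scaling we may assume $r_1=r_2=1$. Parameterizing $\sqrt{u_j}=\alpha_j\in[0,1]$ and $-c_j=1-\alpha_j$, the inequality reduces to
\[\max_{(\alpha_1,\alpha_2,\lambda)\in[0,1]^3}\Big(\sqrt{\lambda\alpha_1^2+(1-\lambda)\alpha_2^2}-\lambda\alpha_1-(1-\lambda)\alpha_2\Big)\leq \tfrac{1}{4}.\]
The interior KKT conditions force $\alpha_1=\alpha_2$, at which the expression vanishes, so the maximum is attained on the boundary of $[0,1]^2$ in $(\alpha_1,\alpha_2)$; on each boundary stratum the problem collapses to $\max_{\lambda\in[0,1]}(\sqrt{\lambda}-\lambda)=\tfrac{1}{4}$, attained at $\lambda=\tfrac{1}{4}$. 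The bound is tight, witnessed by $\lambda=\tfrac{1}{4},\alpha_1=1,\alpha_2=0$, so the constants $\tfrac{4}{5}$ and $\tfrac{5}{4}$ cannot be improved.
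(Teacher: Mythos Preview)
Your reduction to $\min_{\bm z\in\conv(Z)}\bm c'\bm z-\sqrt{\sum_i a_i^2 z_i}$ is exactly what the paper does. From there the paper simply invokes \cite{atamturk2017maximizing} for both the edge structure and the $4/5$--$5/4$ bounds, whereas you supply a self-contained argument. Your edge argument (slice $\conv(Z)$ by the hyperplane $\sum_i a_i^2 z_i=u^\star$, take an extreme point of the slice, and use that extreme points of a one-hyperplane slice of a polytope lie on edges) is clean and standard. Your approximation argument is also correct in substance, with two places that deserve a line more of justification. First, the sentence ``since both $\bar r$ and each $r_j$ are nondecreasing \ldots\ the worst case has $r_1=r_2$'' is not quite the right reason: what you actually use is that $r_1$ is independent of $(u_2,c_2)$, so in the region $r_1\geq r_2$ one may raise $-c_2$ (or $u_2$) to push $r_2$ up to $r_1$ while strictly increasing $\bar r/r_1$; the symmetric argument handles $r_2\geq r_1$. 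Second, ``on each boundary stratum the problem collapses to $\max_\lambda(\sqrt\lambda-\lambda)$'' is literally true only at the corners; on the edges $\alpha_j=1$ one still needs the one-line observation that the remaining $\alpha$-maximizer is at the opposite corner (the interior critical point gives value $0$), after which the bound $1/4$ follows. With those clarifications your proof is complete and has the advantage over the paper's of being self-contained and exhibiting the tightness of the constants.
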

In other words, Proposition~\ref{prop:relax} states that the solution of \eqref{eq:opt_relax} is ``close" to integral (e.g., if $Z=\{0,1\}^n$, then $\bm{\bar z}$ has at most one fractional coordinate), that its associated objective value is similar to the optimal objective value of the mixed-integer problem, and that rounding of this solution yields a constant factor approximation algorithm under mild conditions. 

\begin{proof}[Proof of Proposition~\ref{prop:relax}]
	Projecting out variables $\bm{x}$ exactly the same as the proof of Proposition~\ref{prop:optD}, we find that problem \eqref{eq:opt_relax} simplifies to 
\begin{equation}\label{eq:convex}
\min_{\bm z\in \conv(Z)}\sum_{i=1}^n c_iz_i-\sqrt{\sum_{i=1}^n a_i^2z_i}.
\end{equation}
This particular continuous relaxation of the discrete problem with feasible region $\bm{z}\in Z$ was studied in \cite{atamturk2017maximizing}, and all the results in the proposition follow directly from that paper.  
\end{proof}

Now consider the relaxation of $\conv(X)$, as defined in \eqref{eq:hullD}, obtained by replacing polyhedra $P(\bm{\alpha})$ with their nonlinear relaxations $C(\bm{\alpha})$:
	\begin{align}
\bar C\defeq\bigcap_{\bm{\alpha}\in \R^n}C(\bm \alpha)=\Big\{(\bm{x},\bm{z})\in \R^{2n}:  (\bm{x},\bm{z})\in C(\bm \alpha),\; \forall \bm{\alpha}\in \R^n\Big\}.
\end{align}
Proposition~\ref{prop:perspective} below states that the relaxation $\bar C$ is in fact equivalent to the perspective relaxation.

\begin{proposition}\label{prop:perspective} 
	$\bar C=R_{\text{persp}}.$
\end{proposition}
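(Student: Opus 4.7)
The plan is to prove the equality by showing two inclusions. The easy direction, $R_{\text{persp}}\subseteq \bar C$, follows from the Cauchy--Schwarz inequality. Given $(\bm{x},\bm{z})\in R_{\text{persp}}$ and any $\bm{\alpha}\in \R^n$, I would write
\begin{equation*}
\sum_{i=1}^n |\alpha_i x_i|=\sum_{i=1}^n\bigl(|\alpha_i|\sqrt{z_i}\bigr)\cdot\frac{|x_i|}{\sqrt{z_i}}\leq \sqrt{\sum_{i=1}^n\alpha_i^2 z_i}\cdot\sqrt{\sum_{i=1}^n\frac{x_i^2}{z_i}}\leq \sqrt{\sum_{i=1}^n\alpha_i^2 z_i},
\end{equation*}
where the last inequality uses the defining constraint of $R_{\text{persp}}$. (The $0/0=0$ convention together with the implicit fact that $z_i=0\Rightarrow x_i=0$ in $R_{\text{persp}}$ makes each term well-defined.) This certifies $(\bm{x},\bm{z})\in C(\bm{\alpha})$ for every $\bm{\alpha}$.

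For the reverse direction $\bar C\subseteq R_{\text{persp}}$, the key idea is to pick a \emph{single} clever $\bm{\alpha}$ tailored to the given point $(\bm{x},\bm{z})$. First I would handle the degenerate coordinates: if $z_i=0$, then plugging $\bm{\alpha}=\bm{e}_i$ into the defining inequality of $C(\bm{e}_i)$ yields $|x_i|\leq \sqrt{z_i}=0$, so $x_i=0$. Thus $z_i=0\Rightarrow x_i=0$, and every term $x_i^2/z_i$ is well-defined under the paper's convention. Now choose $\alpha_i\defeq x_i/z_i$ (understood as $0$ whenever $z_i=0$). Direct substitution gives
\begin{equation*}
\sum_{i=1}^n |\alpha_i x_i|=\sum_{i=1}^n \frac{x_i^2}{z_i},\qquad \sqrt{\sum_{i=1}^n \alpha_i^2 z_i}=\sqrt{\sum_{i=1}^n \frac{x_i^2}{z_i}}.
\end{equation*}
Setting $s\defeq \sum_{i=1}^n x_i^2/z_i$, the inclusion $(\bm{x},\bm{z})\in C(\bm{\alpha})$ reduces to $s\leq \sqrt{s}$, which forces $s\leq 1$. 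Combined with $\bm{z}\in \conv(Z)$ (inherited from $\bar C$), this is precisely the condition defining $R_{\text{persp}}$.

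The main (minor) obstacle is bookkeeping around the $0/0$ convention and the case $z_i=0$; the preliminary application of $\bm{\alpha}=\bm{e}_i$ is what makes this safe, allowing the Cauchy--Schwarz step and the subsequent substitution to be performed without division-by-zero issues. Beyond that, both inclusions reduce to one-line calculations, and the symmetry between the two choices --- arbitrary $\bm{\alpha}$ for one direction versus the canonical choice $\alpha_i=x_i/z_i$ for the other --- is what makes the perspective relaxation coincide exactly with the infinite intersection $\bar C$.
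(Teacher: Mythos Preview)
Your proof is correct. The two-inclusion argument via Cauchy--Schwarz for $R_{\text{persp}}\subseteq \bar C$ and the canonical choice $\alpha_i=x_i/z_i$ for $\bar C\subseteq R_{\text{persp}}$ is clean, and your preliminary use of $\bm{\alpha}=\bm{e}_i$ to dispose of the $z_i=0$ case is exactly the right bookkeeping.

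The paper takes a slightly different route in presentation: it rewrites membership in $\bar C$ as the single constraint $\max_{\bm{\alpha}}\bigl(\sum_i|\alpha_ix_i|-\sqrt{\sum_i\alpha_i^2z_i}\bigr)\leq 0$, observes that the maximand is positively homogeneous in $\bm{\alpha}$ (so the supremum is either $0$ or $+\infty$), and then invokes an external result \cite[Proposition~2]{gomez2021strong} stating that unboundedness occurs precisely when $\sum_ix_i^2/z_i>1$. Your argument is effectively a self-contained proof of that cited characterization: the Cauchy--Schwarz step shows boundedness when $\sum_ix_i^2/z_i\leq 1$, and your choice $\alpha_i=x_i/z_i$ (which, once scaled, witnesses unboundedness in the paper's framing) directly yields $s\leq\sqrt{s}$. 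So the underlying mathematics is the same, but your version is more elementary and does not depend on an outside reference, at the cost of being slightly longer.
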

\begin{proof}
Note that the set $\bar C$ can be described with constraint $\bm{z}\in \conv(Z)$ and the single nonlinear constraint
		\begin{equation}\label{eq:constrainedform3.5}
	0\geq \max_{\bm{\alpha}\in \R^n}\sum_{i=1}^n | \alpha_i x_i|- \sqrt{\sum_{i=1}^n  \alpha_i^2z_i}.
	\end{equation}
	Since the function in \eqref{eq:constrainedform3.5} is positively homogeneous in $\bm{\alpha}$, it follows that either the optimization problem in unbounded (and the constrained is violated), or the optimization problem is bounded (and the constraint is satisfied). Finally, a characterization on whether this problem is bounded or not can be found in \cite[Proposition 2]{gomez2021strong}: problem \eqref{eq:constrainedform3.5} is unbounded if and only if $\sum_{i=1}^n x_i^2/z_i>1$. Thus, concluding the proof.
	
	\ignore{
	Now suppose that $\sum_{i=1}^n\frac{x_i^2}{z_i}>1$. In that case, consider the solutions given by $\alpha_i=\kappa \frac{x_i}{z_i}$ for $\kappa> 0$. In that case, we find that 
	\begin{align*}
	\sum_{i=1}^n | \alpha_i x_i|- \sqrt{\sum_{i=1}^n  \alpha_i^2z_i}&=\kappa\sum_{i=1}^n \frac{x_i^2}{z_i}- \kappa\sqrt{\sum_{i=1}^n\frac{x_i^2}{z_i}}\\
		&=\kappa\sqrt{\sum_{i=1}^n \frac{x_i^2}{z_i}}\underbrace{\left(\sqrt{\sum_{i=1}^n \frac{x_i^2}{z_i}}-1\right)}_{>0}\\
		&> 0.
		\end{align*}
In particular, we find that if $(\bm{x},\bm{z})\not\in \Big\{(\bm{x},\bm{z})\in \R^{n}\times [0,1]^n: \sum_{i=1}^n \frac{x_i^2}{z_i}\leq 1\Big\}$, then $(\bm{x},\bm{z})\not\in \bar C$, concluding the proof. }

\end{proof}

\begin{remark}
Observe that the big-M constraints \eqref{eq:bigM} are not implied by relaxation $\bar C=R_{\text{persp}}$. Although these inequalities are not hugely beneficial (in light of Proposition~\ref{prop:relax}), they should be still added to the relaxation due to their simplicity and because they are required to describe $\conv(X)$ (as shown in Example~\ref{ex:bigM}).\qed
\end{remark}
\ignore{
\begin{remark}\label{ex:card}
	Suppose that $Z=\{\bm{z}\in \{0,1\}^n: \|\bm{z}\|_1= k\}$, and let $\bm{\alpha}=\ones$. In this case  \begin{align*}P(\ones)&=\conv\left(\left\{\bm{z}\in \{0,1\}^n,\bm{w}\in \R_+^n:\frac{1}{2}\sum_{i=1}^n w_{i}\leq \sqrt{\|\bm{z}\|_1},\; \|\bm{z}\|_1= k\right\}\right)\\
	&=\left\{\bm{z}\in [0,1]^n,\bm{w}\in \R_+^n:\frac{1}{2}\sum_{i=1}^n w_{i}\leq \sqrt{k},\;  \|\bm{z}\|_1= k\right\}\\
	&=C(\ones),
	\end{align*} 
	that is, the natural continuous relaxation is exact in this case. 
	Moreover, we find that  
	\begin{align}
	    &	\exists \bm{w}\in \R_+^n \text{ s.t. }\sum_{i=1}^n\frac{x_i^2}{w_i}-\frac{1}{4}\sum_{i=1}^n w_i\leq 0,\; (\bm{z},\bm{w})\in C(\bm \ones)\notag\\
	    \Leftrightarrow &\sum_{i=1}^n |x_i|\leq \sqrt{k},\label{eq:absolute}
	\end{align}
	where the equivalence is found by repeating all the steps of the proof of Proposition~\ref{prop:perspective} leading to inequality~\eqref{eq:constrainedform3}, and replacing $\sqrt{\sum_{i=1}^n\alpha_i^2z_i}=\sqrt{k}$. In particular, we find that inequality \eqref{eq:absolute}, which was proposed in \cite{dey2021using} in the context of sparse PCA, is ``necessary" to describe $\conv(X)$, and is also implied by the natural continuous relaxation $\bar C=R_{\text{persp}}$. 
	\qed
\end{remark}
}

\section{Approximate robust counterpart}\label{sec:counterpart}

We now turn our attention to the robust optimization problem~\eqref{eq:robust} with uncertainty set \eqref{eq:uncertaintySet}, discussed in \S\ref{sec:apps}. 
Instead of solving \eqref{eq:robust} directly, which is difficult due to the discrete uncertainty set, we propose to solve instead the perspective approximation
\begin{equation}\label{eq:approximation}
	\xi=\min_{\bm{y}\in Y}\bm{\tilde a'y}+\max_{(\bm{x},\bm{z})\in \R^n\times [0,1]^n}\left\{\bm{x'y}:\sum_{i=1}^n(d_ix_i)^2/z_i\leq b,\;\sum_{i=1}^n z_i\leq k\right\}.
\end{equation}
Since we relaxed the inner maximization problem, it follows that \eqref{eq:approximation} is a conservative approximation of \eqref{eq:robust}. Moreover, since $\bm{z}$ does not appear in the objective of the inner maximization problem, the condition of Proposition~\ref{prop:relax} is satisfied: for any fixed $\bm{y}$ the objective value of the inner maximization problem in \eqref{eq:approximation} is at most $5/4$ times the corresponding objective value
in \eqref{eq:robust}. Thus, if $\bm{\tilde a'y}\geq 0$ for all $\bm{y}\in Y$, then solving \eqref{eq:approximation} results in a $1.25$-approximation algorithm for \eqref{eq:robust}. 
We now derive a conic-quadratic formulation of problem \eqref{eq:approximation}. 

\begin{proposition}\label{prop:counterpart}
	Given $\bm{y}\in \R^n$, let $\{(1),(2),\dots,(n)\}$ be a permutation of $\{1,\dots,n\}$ such that $(y_{(1)}/d_{(1)})^2\geq (y_{(2)}/d_{(2)})^2\geq \dots \geq (y_{(n)}/d_{(n)})^2$, and let $s(\bm{y})=\sum_{i=1}^k (y_{(i)}/d_{(i)})^2$ be the sum of the largest $k$ such values. Then problem \eqref{eq:approximation} is equivalent to $$\xi=\min_{\bm{y}\in Y}\bm{\tilde a'y}+\sqrt{bs(\bm{y})}.$$
	Moreover, 
	this problem can be reformulated as the SOCP 
	\begin{subequations}\label{eq:robustSocp}
	\begin{align}
		\min_{\bm{y},\bm{t},\lambda,\mu}\;&\bm{\tilde a'y}+\lambda b+\mu k+\sum_{i=1}^n t_i\\
		\text{s.t.}\;& (y_i/d_i)^2\leq 4(t_i+\mu)\lambda&i=1,\dots,n\label{eq:robustSocp_rotated}\\
		&\bm{y}\in Y\\
		&\bm{t}\in \R_+^n,\; \lambda\in \R_+,\; \mu\in \R_+. 
			\end{align}
		\end{subequations}
\end{proposition}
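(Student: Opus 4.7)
The plan is to carry out the reduction in two separate steps: first to evaluate the inner maximization in \eqref{eq:approximation} in closed form for a given $\bm{y}$, and second to reformulate the resulting minimization $\min_{\bm{y}\in Y} \bm{\tilde a}'\bm{y}+\sqrt{b\,s(\bm{y})}$ as the SOCP \eqref{eq:robustSocp}.

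For the first step, I would fix $\bm{y}$ and $\bm{z}$ and optimize over $\bm{x}$ first. Since the constraint $\sum_i (d_i x_i)^2/z_i \leq b$ is a perspective-form ellipsoidal constraint, Cauchy--Schwarz (equivalently, the Lagrangian calculation already performed in the proof of Proposition~\ref{prop:NP}) yields the closed form $\max_{\bm{x}} \bm{x}'\bm{y} = \sqrt{b\sum_{i=1}^n (y_i/d_i)^2 z_i}$, where the paper's $0/0=0$ convention handles indices with $z_i=0$. Substituting this back, the outer maximization becomes a maximization of $\sum_i (y_i/d_i)^2 z_i$ over the polytope $\{\bm{z}\in[0,1]^n:\sum_i z_i\leq k\}$, since the outer square root is monotone. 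The optimal extreme point is obtained by setting $z_i=1$ on the $k$ indices with the largest $(y_i/d_i)^2$, delivering exactly $s(\bm{y})$ and hence the first identity.

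For the second step, I would combine two standard reformulations. LP duality applied to the linear program that defines the top-$k$ sum gives $s(\bm{y})=\min_{\mu\geq 0,\,\bm{t}\geq 0}\{k\mu + \sum_{i=1}^n t_i : t_i+\mu\geq (y_i/d_i)^2,\ i=1,\dots,n\}$. To handle the outer square root, I would use the elementary identity $\sqrt{b\alpha}=\min_{\lambda\geq 0}\{\lambda b + \alpha/(4\lambda)\}$, valid for $b,\alpha\geq 0$ with minimizer $\lambda^*=\tfrac12\sqrt{\alpha/b}$. Substituting this with $\alpha=k\mu+\sum_i t_i$ and performing the change of variables $(t_i,\mu)\mapsto (t_i/(4\lambda),\mu/(4\lambda))$ to clear the $1/(4\lambda)$ factor converts the objective into the linear expression $\lambda b + k\mu + \sum_i t_i$ appearing in \eqref{eq:robustSocp} and transforms $t_i+\mu\geq (y_i/d_i)^2$ into the rotated-cone inequality $(y_i/d_i)^2\leq 4\lambda(t_i+\mu)$, which is precisely \eqref{eq:robustSocp_rotated}. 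Since rotated-cone inequalities are SOCP-representable, this completes the reformulation.

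The main subtlety I expect is bookkeeping the degenerate cases that arise from the change of variables and from the $0/0$ convention: namely, that the closed form $\sqrt{b\sum_i (y_i/d_i)^2 z_i}$ remains valid when some coordinates of $\bm{z}$ vanish (and the corresponding $x_i$ must be $0$), and that the boundary case $\lambda=0$ in \eqref{eq:robustSocp} (which forces $\bm{y}=\bm{0}$ via \eqref{eq:robustSocp_rotated}) consistently reproduces the value $\sqrt{b\,s(\bm{y})}=0$. Both are routine but require explicit verification, e.g.\ by taking limits $\lambda\to 0^+$, to make the equivalence fully rigorous.
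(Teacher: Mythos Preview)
Your argument is correct, but it proceeds in the opposite order from the paper's proof. You first evaluate the inner maximization in closed form (Cauchy--Schwarz over $\bm{x}$, then a top-$k$ LP over $\bm{z}$) to obtain $\xi=\min_{\bm{y}\in Y}\bm{\tilde a}'\bm{y}+\sqrt{b\,s(\bm{y})}$, and only afterwards build the SOCP by combining the LP dual of the top-$k$ sum with the identity $\sqrt{b\alpha}=\min_{\lambda\geq 0}\{\lambda b+\alpha/(4\lambda)\}$ and a change of variables. The paper instead dualizes the two constraints of the inner problem via Lagrange multipliers $(\lambda,\mu)$, then invokes the Fenchel identity $x^2/z=\max_p\{px-p^2 z/4\}$ together with Sion's minimax theorem to linearize the perspective term; optimizing out $\bm{p}$ and $\bm{z}$ delivers the SOCP \eqref{eq:robustSocp} directly, and the closed form is recovered \emph{afterwards} by projecting out $\lambda$ and $\mu$ (via the substitution $\gamma=\lambda\mu$). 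Your route is more elementary---it needs neither the Fenchel lemma nor Sion's theorem---whereas the paper's route has the advantage that the SOCP drops out immediately from the Lagrangian without any a posteriori change of variables, and it generalizes more readily if one wanted to replace the cardinality constraint by a general polyhedral constraint on $\bm{z}$.
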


Observe that since both $\lambda\geq 0$ and $t_i+\mu\geq 0$, \eqref{eq:robustSocp_rotated} are rotated cone constraints and thus \eqref{eq:robustSocp} is indeed SOCP-representable (provided that $Y$ is). 
The derivation of Proposition~\ref{prop:counterpart} is based on the following Fenchel duality result used in \cite{atamturk2020safe}.
\begin{lemma}[Fenchel dual]\label{lem:fenchel}
	For any $x\in \R$ and $0\leq z\leq 1$,
	\begin{equation*}\frac{x^2}{z}=\max_{p\in \R}px-\frac{p^2}{4}z.\end{equation*}
\end{lemma}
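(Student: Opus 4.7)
The plan is to verify the identity by splitting on whether $z>0$ or $z=0$, and in each case directly solving the unconstrained one-dimensional optimization on the right-hand side. The function $p \mapsto px - (p^2/4)z$ is quadratic in $p$, so standard calculus handles the interesting case, and the division-by-zero convention stated in the introduction handles the boundary.

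First I would dispose of the case $z>0$. Here the objective $f(p) \defeq px - (p^2/4)z$ is strictly concave in $p$ (since the coefficient $-z/4$ of $p^2$ is negative), so its unique maximizer is found by setting $f'(p) = x - (pz/2) = 0$, giving $p^* = 2x/z$. Substituting back yields
\begin{equation*}
f(p^*) \;=\; \frac{2x^2}{z} - \frac{1}{4}\cdot\frac{4x^2}{z^2}\cdot z \;=\; \frac{2x^2}{z} - \frac{x^2}{z} \;=\; \frac{x^2}{z},
\end{equation*}
which matches the left-hand side.

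Next I would handle $z=0$. The right-hand side reduces to $\sup_{p\in \R} px$. If $x=0$ as well, then the supremum is attained at $p=0$ with value $0$, matching the convention $x^2/z = 0/0 = 0$. If $x\neq 0$, then sending $p\to +\infty$ (when $x>0$) or $p\to -\infty$ (when $x<0$) drives $px$ to $+\infty$, matching the convention $x^2/z = +\infty$ whenever $x\neq 0$ and $z=0$.

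No step should pose a real obstacle; the only subtlety is making sure the $z=0$ boundary case is reconciled with the paper's division-by-zero convention from the introduction, rather than silently assuming $z>0$ as is common in perspective-function arguments. Once both cases are dispatched, the identity holds in all instances covered by the stated hypotheses $x\in \R$, $0\le z\le 1$.
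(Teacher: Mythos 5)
Your proof is correct and follows essentially the same route as the paper's: the same case split on $z=0$ versus $z>0$, the same maximizer $p^*=2x/z$ in the nondegenerate case, and the same appeal to the division-by-zero convention for the boundary. Yours simply spells out the calculus and the $z=0$ subcases in more detail.
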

\begin{proof}
	If $x=z=0$, then both sides of the equality are $0$. If $z=0$ and $x\neq 0$, then both sides are equal to $+\infty$. Otherwise, an optimal solution of the maximization problem is $p^*=2\frac{x}{z}$, and the corresponding objective value is~$\frac{x^2}{z}$. 
\end{proof}

\begin{proof}[Proof of Proposition~\ref{prop:counterpart}] We find that
\begin{align*}
\xi&=\min_{\substack{\bm{y}\in Y\\\lambda,\mu\in \R_+}}\bm{\tilde a'y}+\lambda b+\mu k+\max_{(\bm{x},\bm{z})\in \R^n\times [0,1]^n}\left\{\bm{y'x}-\lambda\sum_{i=1}^n(d_ix_i)^2/z_i-\mu\sum_{i=1}^n z_i\right\}\\
&\tag{$\because$ Slater condition holds and strong duality of Lagrangian relaxation}\\
&=\min_{\substack{\bm{y}\in Y\\\lambda,\mu\in \R_+,\bm{p}\in \R^n}}\bm{\tilde a'y}+\lambda b+\mu k+\max_{(\bm{x},\bm{z})\in \R^n\times [0,1]^n}\left\{\sum_{i=1}^n(y_i-\lambda d_ip_i)x_i+\sum_{i=1}^n(0.25\lambda p_i^2-\mu)z_i\right\}\tag{$\because$ Lemma~\ref{lem:fenchel} and Sion's minimax theorem}\\
&=\min_{\substack{\bm{y}\in Y\\\lambda,\mu\in \R_+}}\bm{\tilde a'y}+\lambda b+\mu k+\max_{\bm{z}\in  [0,1]^n}\left\{\sum_{i=1}^n\Big(0.25\frac{(y_i/d_i)^2}{\lambda}-\mu\Big)z_i\right\}\tag{$\because p_i^*=y_i/(\lambda d_i)$}\\
&=\min_{\substack{\bm{y}\in Y\\\lambda,\mu\in \R_+}}\bm{\tilde a'y}+\lambda b+\mu k+\sum_{i=1}^n\max\left\{0,\frac{(y_i/d_i)^2}{\lambda}-\mu\right\}\tag{$\because z_i^*=\mathbbm{1}_{\{0.25(y_i/d_i)^2>\lambda\mu\}}$}.
\end{align*}
The formulation above corresponds directly to the SOCP formulation \eqref{eq:robustSocp}. We now continue projecting out variables  to recover the explicit form in the original space of variables:
\begin{align*}
\xi&=\min_{\substack{\bm{y}\in Y\\\lambda,\mu\in \R_+}}\bm{\tilde a'y}+\lambda b+\mu k+\frac{1}{\lambda}\sum_{i=1}^n\max\left\{0,0.25(y_i/d_i)^2-\lambda\mu\right\}\\
&=\min_{\substack{\bm{y}\in Y\\\lambda,\gamma\in \R_+}}\bm{\tilde a'y}+\lambda b+\frac{1}{\lambda}\left(\gamma k+\sum_{i=1}^n\max\left\{0,0.25(y_i/d_i)^2-\gamma\right\}\right)\tag{$\gamma\defeq\lambda\mu$}\\
&=\min_{\substack{\bm{y}\in Y\\\gamma\in \R_+}}\bm{\tilde a'y}+2\sqrt{b}\sqrt{\gamma k+\sum_{i=1}^n\max\left\{0,0.25(y_i/d_i)^2-\gamma\right\}}\tag{$\lambda^*=\sqrt{\frac{\gamma k+\sum_{i=1}^n\max\left\{0,0.25(y_i/d_i)^2/-\gamma\right\}}{b}}$}.
\end{align*}
Finally, for any fixed $\bm{y}$, an optimal value of $\gamma$ is given by $(k+1)$-largest value of $0.25(y_i/d_i)^2$, i.e., $\gamma^*=0.25(y_{(k+1)}/d_{(k+1)})^2$, concluding the proof.
\end{proof}

\section{Computations}\label{sec:computations}

According to the results of \S\ref{sec:relaxations}, the perspective is a simple relaxation that is guaranteed to be strong (Proposition~\ref{prop:relax}). Thus, we suggest its use in practice. Note that if set $X$ appears directly in an optimization problem (e.g., the first two applications discussed in \S\ref{sec:apps}),
the perspective is arguably already the state-of-the-art relaxation -- thus we omit computations for those cases. However, we illustrate its application to the robust optimization problem \eqref{eq:robust} with uncertainty set \eqref{eq:uncertaintySet}.
In particular, we consider a simple portfolio optimization problem with $Y=\{\bm{y}\in \R^n: \sum_{i=1}^n y_i=1,\; \bm{y}\geq 0\}.$

\subsection{Methods} We compare three conservative approximations of \eqref{eq:robust} -- the first two are based on commonly used methods in the literature.

\paragraph{Budgeted uncertainty} This approach, inspired by \cite{bertsimas2004price}, replaces the ellipsoidal constraint with simple bound constraints and solves instead
 \begin{equation*}
 	\min_{\bm{y}\in Y}\bm{\tilde a'y}+\max_{(\bm{x},\bm{z})\in \R^n\times \{0,1\}^n}\left\{\bm{x'y}:|x_i|\leq \sqrt{b}/d_i,\;\sum_{i=1}^n z_i\leq k,\;\bm x\circ(\ones-\bm z)=0\right\}.
 \end{equation*}
This optimization problem can be reformulated as the linear optimization \cite{bertsimas2004price}
\begin{align*}
		\min_{\bm{y},\bm{t},\mu}\;&\bm{\tilde a'y}+b\mu+\sum_{i=1}^n t_i\\
		\text{s.t.}\;&(\sqrt{b}/d_i) |y_i|\leq \mu+t_i\qquad i=1,\dots,n\\
		&\bm{y}\in Y,\; \bm{t}\in \R_+^n,\;\mu\in \R_+.
\end{align*}
Note that $\bm{y}\geq 0$ in our experiments. Thus, we replace $|y_i|$  with $y_i$ in all constraints.
\paragraph{Ellipsoidal uncertainty} This approach, inspired by \cite{ben2000robust}, ignores the cardinality constraint and solves instead
\begin{equation*}
	\min_{\bm{y}\in Y}\bm{\tilde a'y}+\max_{\bm{x}\in \R^n}\left\{\bm{x'y}:\sum_{i=1}^n(d_ix_i)^2\leq b\right\}.
\end{equation*}
This optimization problem can be reformulated as the SOCP \cite{ben2000robust}
\begin{align*}
	\min_{\bm{y},\bm{t},\mu}\;&\bm{\tilde a'y}+\sqrt{b}\cdot\sqrt{\sum_{i=1}^n (y_i/d_i)^2}\\
	\text{s.t.}\;
	&\bm{y}\in Y,\; \bm{t}\in \R_+^n,\;\mu\in \R_+.
\end{align*}

\paragraph{Perspective approximation} The approach we propose, described in \S\ref{sec:counterpart}. 

\subsection{Results} We set $n=200$ in our computations, and we set $k\in \{5,10,20\}$ and $b\in \{5,10,20\}$ in our computations. Each entry of $\bm{a}$ and $\bm{d}$ is drawn from an uniform distribution on the interval $[0,1]$ -- under these conditions, since $\bm{y}\geq 0$ and $\bm{a}\geq 0$, then $\bm{a'y}\geq 0$ for all $\bm{y}\in Y$ and the perspective approximation is a 1.25 approximation algorithm for \eqref{eq:robust}. All optimization problems are solved using CPLEX 12.8 with the default settings, in a laptop with  Intel Core i7-8550U CPU and 16 GB RAM. Solution times for all methods are less than 0.1 seconds in all cases. 

 For each combination of parameters $(b,k)$, we generate 10 instances and record for each method: the nominal objective value $\bm{\tilde a'y^*}$, where $\bm{y^*}$ is the solution produced; and the worst-case realization given by
\begin{equation}\label{eq:mio} \bm{\tilde a'y^*}+\max_{(\bm{x},\bm{z})\in \R^n\times \{0,1\}^n}\left\{\bm{x'y^*}:\sum_{i=1}^n(d_ix_i)^2/z_i\leq b,\;\sum_{i=1}^n z_i\leq k,\; \bm x\circ(\ones-\bm z)=0\right\}.\end{equation}
Note that computing the worst-case realization requires solving a mixed-integer optimization problem. However, since the perspective reformulation results in a strong relaxation and $n=200$ is not too large, problem~\eqref{eq:mio} can be comfortably solved to optimality using CPLEX. Figure~\ref{fig:comp} presents the results, showing the nominal objective value and worst-case realization for each combination of parameters and each instance. 

\begin{figure}[!hp]
	\centering
	\subfloat[$k=5,b=5$]{\includegraphics[width=0.33\textwidth,trim={11cm 6cm 11cm 5cm},clip]{./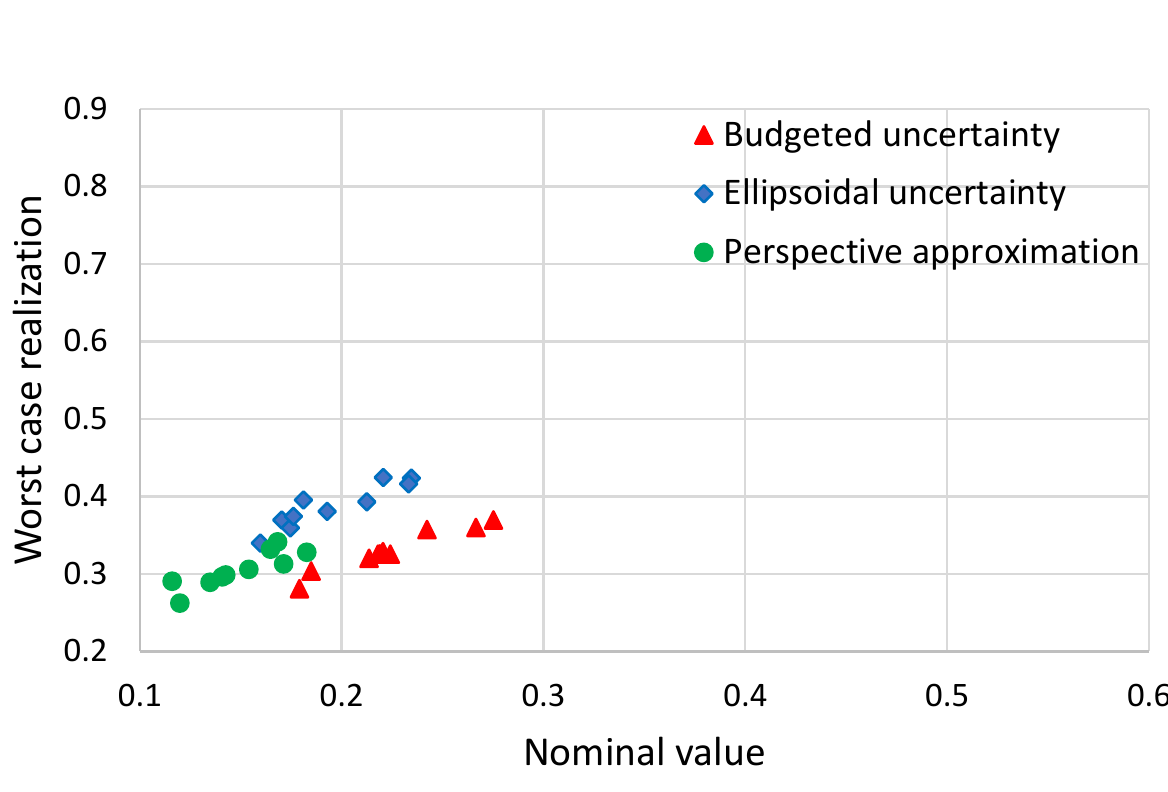}}\hfill\subfloat[$k=5,b=10$]{\includegraphics[width=0.33\textwidth,trim={11cm 6cm 11cm 5cm},clip]{./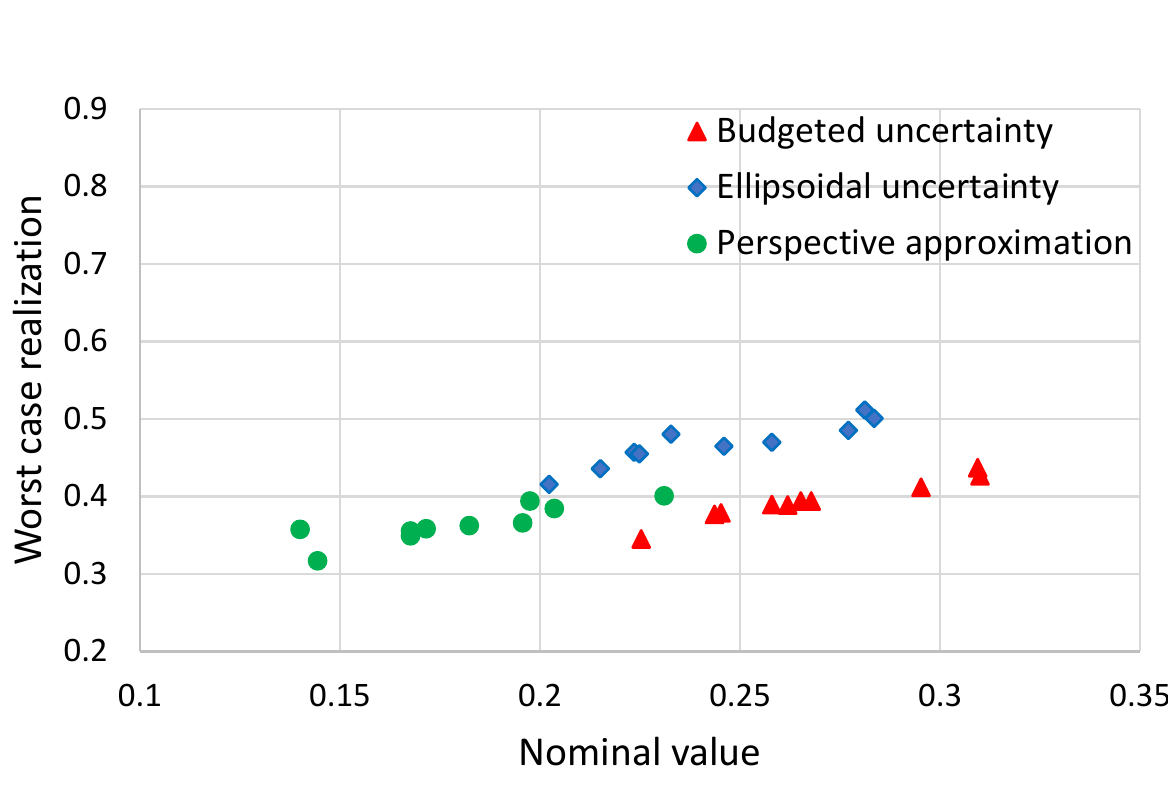}}\hfill	\subfloat[$k=5,b=20$]{\includegraphics[width=0.33\textwidth,trim={11cm 6cm 11cm 5cm},clip]{./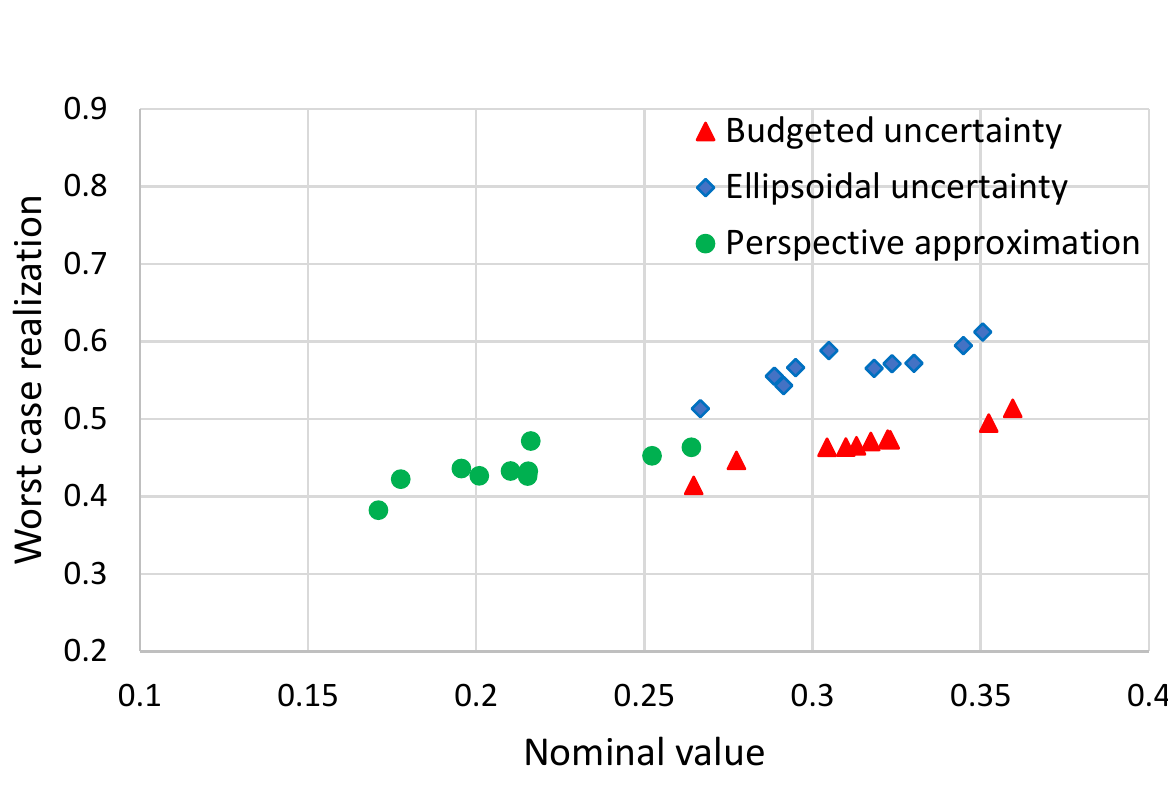}}\hfill\newline \vskip 5mm \subfloat[$k=10,b=5$]{\includegraphics[width=0.33\textwidth,trim={11cm 6cm 11cm 5cm},clip]{./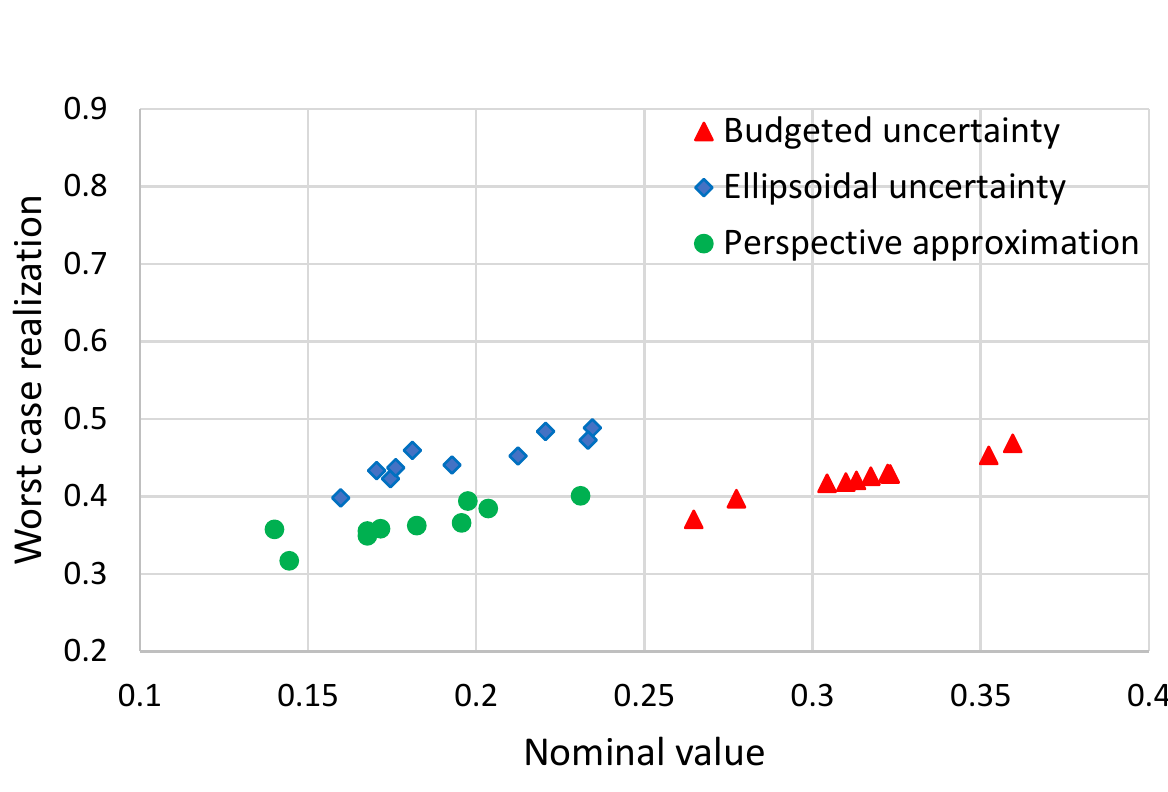}}\hfill\subfloat[$k=10,b=10$]{\includegraphics[width=0.33\textwidth,trim={11cm 6cm 11cm 5cm},clip]{./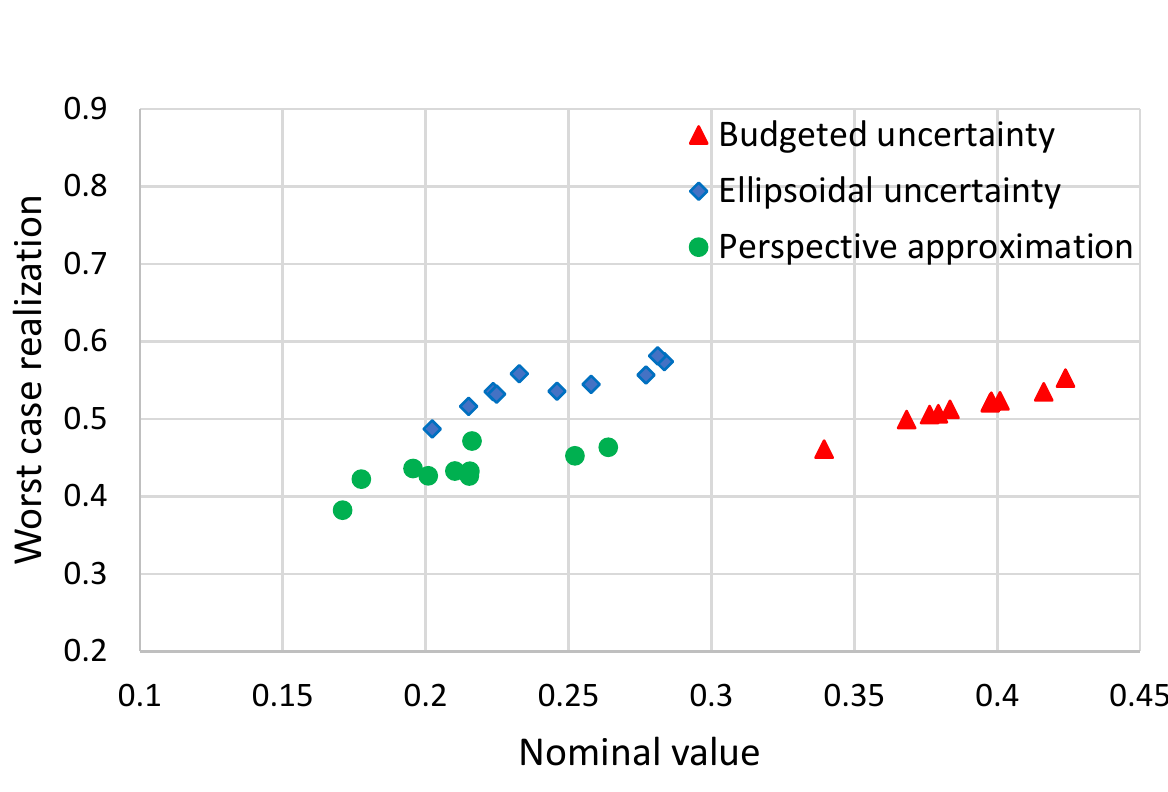}}\hfill\subfloat[$k=10,b=20$]{\includegraphics[width=0.33\textwidth,trim={11cm 6cm 11cm 5cm},clip]{./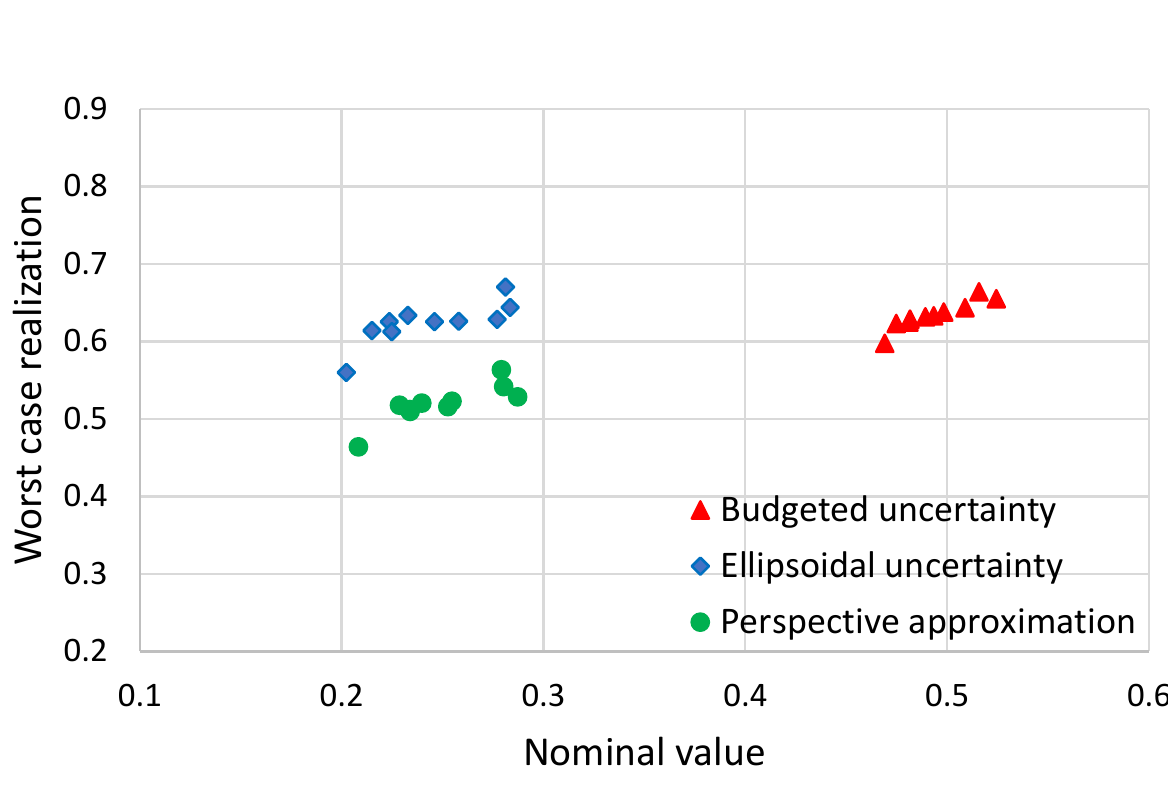}}\hfill\newline \vskip 5mm
	\subfloat[$k=20,b=5$]{\includegraphics[width=0.33\textwidth,trim={11cm 6cm 11cm 5cm},clip]{./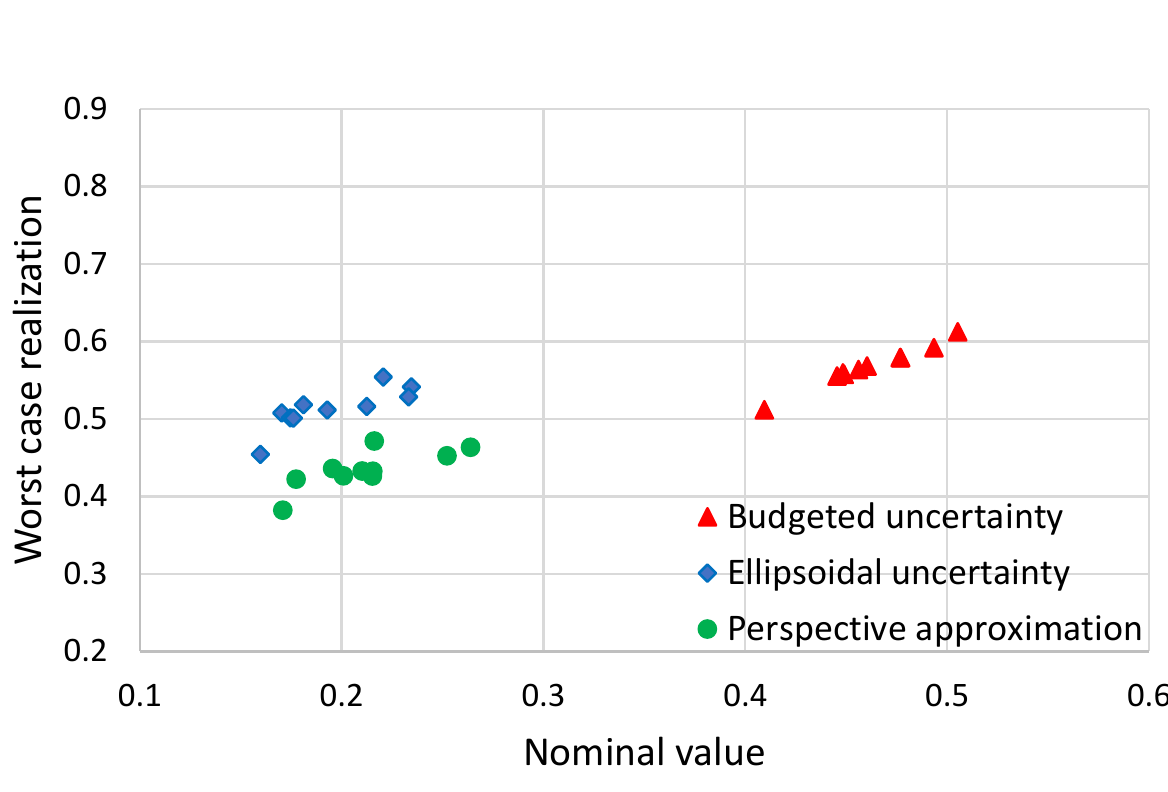}}\hfill\subfloat[$k=20,b=10$]{\includegraphics[width=0.33\textwidth,trim={11cm 6cm 11cm 5cm},clip]{./k20b10.pdf}}\hfill\subfloat[$k=20,b=20$]{\includegraphics[width=0.33\textwidth,trim={11cm 6cm 11cm 5cm},clip]{./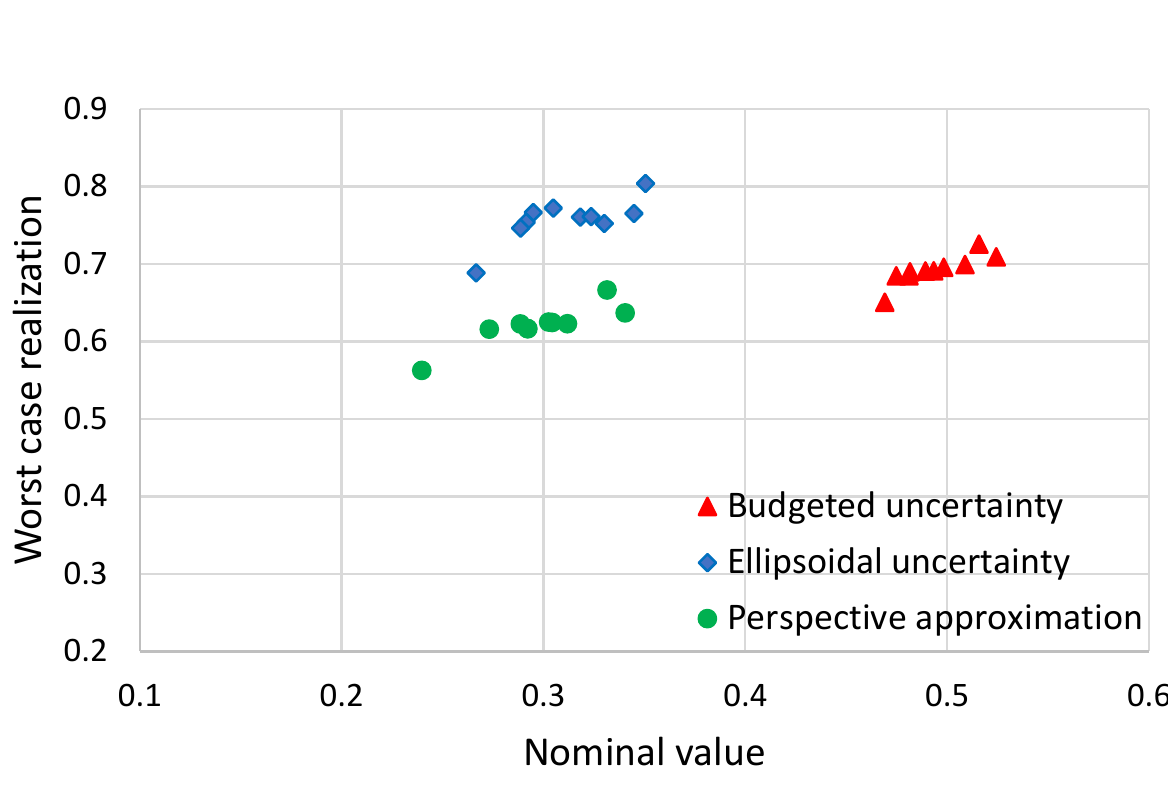}}\hfill
	\caption{\small Nominal value versus worst-case realization for different cardinality and budget parameters. The \textbf{budgeted uncertainty} approach (red triangles) typically yields solutions with large nominal values, particularly for large values of $k$. The \textbf{ellipsoidal uncertainty} approach (blue rhombuses) often results in good nominal values (particularly for large $k$), but the worst-case realizations are large. The \textbf{perspective approximation} (red circles) always results in the best worst-case realizations, and often in the best nominal values.}
	\label{fig:comp}
\end{figure}

We observe that the budgeted uncertainty approach consistently has the worst nominal performance, although it tends to be better in terms of robustness than the ellipsoidal uncertainty. The perspective approximation results in the ``best" worse-case realizations for all the combinations of parameters. It also results in the best solutions in terms of the nominal values, except for the case with $k=20$ and $b=5$ (where the ellipsoidal uncertainty has slightly better nominal performance). \emph{Thus, in our experiments, we can conclude that the perspective approximation is the best approach, delivering the most reliable solutions without affecting (and in most cases improving) the nominal performance.} 

\section{Conclusion}
We characterized the structure of the set $\conv(X)$, established links between the convexification of this set and convexification of polyhedral sets, and studied the strength of the perspective relaxation $R_\text{persp}$.
On the one hand, we showed in this paper that the perspective reformulation is insufficient to describe $\conv(X)$, and that $R_\text{persp}$ does not even match the structure of $\conv(X)$: using the perspective reformulation to approximate $\conv(X)$ is akin to using a nonlinear relaxation to approximate a polyhedral set, see Proposition~\ref{prop:perspective}. On the other hand, we showed that while the perspective reformulation can be strengthened using polyhedral theory as discussed in \S\ref{sec:facets}, it is already quite strong. Our experiments on robust optimization with discrete uncertainty sets suggest that the perspective reformulation can be used as an accurate proxy, resulting in tractable approximations that outperform classical alternatives in the literature.


\section{Acknowledgments}

Andr\'es G\'omez was supported in part by grant 2006762 from the National Science Foundation, and by grant FA9550-22-1-0369 from the Air Force Office of Scientific Research.  Weijun Xie was supported in part by grants 2046426 and 2153607 from the National Science Foundation.

	\bibliographystyle{plainnat}
	\bibliography{main}

\end{document}